\newtheorem{thm}{Theorem}[section]
\newtheorem{lem}[thm]{Lemma}
\newtheorem{prop}[thm]{Proposition}
\theoremstyle{definition}
\newtheorem{defn}[thm]{Definition}
\theoremstyle{remark}
\newtheorem{rem}[thm]{Remark}
\numberwithin{equation}{section}
\DeclareMathOperator{\Spec}{Spec}
\DeclareMathOperator{\Spf}{Spf}
\newcommand{\shf}[1]{\mathcal{#1}}
\newcommand{\ra}{\rightarrow}
\newcommand{\sembrack}[1]{[\![#1]\!]}
\newcommand{\db}{\mathbb}
\newcommand{\Rmnum}[1]{\expandafter\@slowromancap\romannumeral #1@}
\numberwithin{equation}{section}
\begin{document}

\title[fixed point locus of the Verschiebung]{The 
 fixed point locus of the Verschiebung  on $\shf{M}_X(2, 0)$ for genus-2 curves $X$ in charateristic $2$}
\author{YanHong Yang}
\address{Department of Mathematics, Columbia University,  Room 509, MC 4406,
2990 Broadway
\\ New York, NY 10027, USA}
\curraddr{}
\email{yhyang@math.columbia.edu}
\date{}
\maketitle


\begin{abstract} 
In this note, we prove that for every ordinary genus-$2$ curve $X$ over a finite field $\kappa$ of characteristic $2$ with  $\text{Aut}(X/\kappa)=\db{Z}/2\db{Z} \times S_3$, there exist $\text{SL}(2,\kappa\sembrack{s})$-representations of $\pi_1(X)$ such that the image of $\pi_1(\overline{X})$ is infinite. This result gives a geometric interpretation of Laszlo's counterexample \cite{Laszlo}  to a question regarding the finiteness of the geometric monodromy of representations of the fundamental group \cite{de Jong}.
\end{abstract}

\section{Introduction}
 It was conjectured by de Jong in \cite[Conjecture 2.3]{de Jong} that  given a finite field $\db{F}$ of characteristic $l$ and a  normal variety $Y$ over a finite field $\kappa$ of characteristic $p\neq l$,  every representation $\rho: \pi_1(Y)\ra \text{GL}(r, \db{F}(\!(s)\!))$ has a finite geometric monodromy. This conjecture was proved by de Jong in the $\text{GL}_2$-case 
\cite{de Jong}, by B\"ockle--Khare in the $\text{GL}_n$-case under some mild condition \cite{Bockle-Khare} and by Gaitsgory modulo the theory of $\db{F}(\!(s)\!)$-sheaves \cite{Gaitsgory}.  Then a natural question comes up: if the hypothesis $l\neq p$ is dropped and moreover $Y$ is proper over $\kappa$, does the conjecture remain true?  Note that when $Y/\kappa$ is not proper, a counterexample has already been given in \cite{de Jong}

In \cite{Laszlo}, Laszlo gave a negative answer to  the above question. He showed that there exists a non-trivial family of rank-$2$  bundles fixed by the square of Frobenius over a specific genus-$2$ curve $C_0/\db{F}_2$. From this he deduced the existence of the desired representations of $\pi_1(C_0\otimes\db{F}_{2^d})$. Recently, Esnault and Langer \cite{Esnault-Langer} have employed Laszlo's example to improve the statement of a $p$-curvature conjecture in characteristic $p$.

It is suspected by de Jong that the representations with an infinite geometric monodromy are rare. Thus one would like to understand the underlying mechanics of Laszlo's example and to know whether such representations exist in other characteristic.

In this note, we give a geometric interpretation of Laszlo's example based on the study of the action of the automorphism group of the curve; this interpretation allows us to produce a family of similar examples. Meanwhile, our method also provides some indication regarding higher characteristic, though it does not directly allow us to produce examples. 

Now we give a brief summary of our results. In \cite{Laszlo}, Laszlo deduced  representations   from a non-trivial family of bundles. We show that the converse also holds.  This equivalence is of course well-known to the experts.
\begin{thm}\label{poslocus}
Let $Y$ be a projective smooth geometrically connected curve over a finite field $\kappa$ and $\shf{M}_{Y}(r, 0)$ be the coarse moduli space of rank-$r$ semistable bundles over $\overline{Y}$ with trivialized determinant. Denote by $\xymatrix{V: \shf{M}_Y(r, 0)  \ar@{.>}[r] & \shf{M}_Y(r, 0)}$ the rational map defined by $[E]\mapsto [F^*_{geo}E]$  with respect to the geometric Frobenius map $F_{geo}$ of $Y$ over $\kappa$. Then the following are equivalent:\\
(1)There exists a finite extension $\widetilde{\kappa}$ of $\kappa$ and a representation  $\rho:\pi_1(Y\otimes_\kappa\widetilde{\kappa}) \ra \text{SL}(r, \widetilde{\kappa}\sembrack{s} )$ such that  $\rho|_{\pi_1(\overline{Y})}\text{ mod }s$ is absolutely irreducible and $\#\rho(\pi_1(\overline{Y}))=\infty$. \\
(2) There exists some $N\in\db{N}$ such that the fixed (geometric)  point locus $ \text{Fix}(V^N)=\{x\in\shf{M}_{Y}(r, 0)|V^N(x)=x \}$  is of positive dimension and contains a stable point in a connected component.  
\end{thm}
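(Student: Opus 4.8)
The plan is to make precise the well-known dictionary linking representations of $\pi_1$ over $\kappa\sembrack{s}$-algebras with Frobenius-fixed families of bundles. Fix a finite extension $\widetilde\kappa/\kappa$ with $N=[\widetilde\kappa:\kappa]$, write $R=\widetilde\kappa\sembrack{s}$, $R_n=R/s^{n+1}$, and let $\overline Y_{R_n}$ denote the corresponding base change of $\overline Y$. The first step is the dictionary at Artinian level. To a continuous $\rho\colon\pi_1(Y_{\widetilde\kappa})\to\mathrm{GL}(r,R_n)$ attach the lisse sheaf $\shf L_\rho$ of free rank-$r$ $R_n$-modules on $Y_{\widetilde\kappa}$; pulled back to $\overline Y$ and tensored with $\shf O_{\overline Y_{R_n}}$ (transition matrices lie in $\mathrm{GL}(r,R_n)$) it becomes a vector bundle $\shf E_\rho$ on $\overline Y_{R_n}$, flat over $R_n$, whose determinant is the bundle attached to $\det\rho$. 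Because the absolute $q^N$-Frobenius acts as the identity on the \'etale topos, $\shf E_\rho$ carries a tautological isomorphism $F_{geo}^{*N}\shf E_\rho\xrightarrow{\ \sim\ }\shf E_\rho$, and this isomorphism \emph{is} the arithmetic descent datum upgrading $\shf L_\rho|_{\overline Y}$ to a sheaf on $Y_{\widetilde\kappa}$; conversely, following Lange--Stuhler, from a pair $(\shf E,\varphi\colon F_{geo}^{*N}\shf E\xrightarrow{\ \sim\ }\shf E)$ one recovers $\shf L_\rho$ as the pullback of $\shf O^r$ to the $\varphi$-fixed locus of the $\mathrm{GL}(r,R_n)$-torsor $\underline{\mathrm{Isom}}(\shf O^r,\shf E)$, which is finite \'etale over $\overline Y$ by Lang's theorem. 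Imposing $\det\rho=1$ and taking $\varprojlim_n$ gives, over $R$ and over $\Spf R$, an equivalence between $\mathrm{SL}(r,R)$-representations of $\pi_1(Y_{\widetilde\kappa})$ and pairs $(\shf E,\varphi)$ on $\overline Y_R$ with $\det\shf E\cong\shf O$ and $\det\varphi=1$; forgetting $\varphi$ loses only a choice in a torsor under the finite group $\mu_r(R)=\mu_r(\widetilde\kappa)$.

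Two consequences of the theory of the bundle attached to a finite local system are the crux. First, $\rho|_{\pi_1(\overline Y)}\bmod s$ is absolutely irreducible if and only if the closed fibre $\shf E_0$ is a \emph{stable} bundle: for a finite-monodromy local system a sub-local-system of slope $0$ is exactly a degree-$0$ subbundle, so residual geometric irreducibility says precisely that $\shf E_0$ has no proper degree-$0$ subbundle. Second, $\#\rho(\pi_1(\overline Y))<\infty$ if and only if $\shf E_\rho\cong\shf E_0\otimes_{\widetilde\kappa}R$, equivalently the classifying arc $\Spf R\to\shf M_Y(r,0)$ is constant. Indeed, if the geometric monodromy is a finite group $G$, then $\shf E_\rho$ is trivialized by the \emph{fixed} finite \'etale $G$-cover of $\overline Y$ attached to $\ker(\rho|_{\pi_1(\overline Y)})$; since a finite group has, over any algebraically closed field, only finitely many isomorphism classes of irreducible representations, only finitely many isomorphism classes of \emph{stable} bundles can occur along the arc, so the arc is constant. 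Conversely, a constant underlying family forces the pair $(\shf E_0,\varphi)$ to be defined over $\widetilde\kappa$, because the possible $\varphi$ form a torsor under the finite group $\mu_r(\widetilde\kappa)$; so $\rho$ is induced from a $\mathrm{GL}(r,\widetilde\kappa)$-valued representation, whose image is finite.

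Granting this, $(1)\Rightarrow(2)$ is immediate: form $\shf E_\rho$ over $\Spf R$; by the first consequence $\shf E_0$ is a stable point of $\shf M_Y(r,0)$, the classifying arc lands in $\mathrm{Fix}(V^N)$ since $F_{geo}^{*N}\shf E_n\cong\shf E_n$ for every $n$, and by the second consequence it is non-constant since $\#\rho(\pi_1(\overline Y))=\infty$; hence the connected component of $\mathrm{Fix}(V^N)$ through $[\shf E_0]$ has positive dimension, which is $(2)$. For $(2)\Rightarrow(1)$, take a connected component $Z$ of $\mathrm{Fix}(V^N)$ of positive dimension containing a stable point. A positive-dimensional connected scheme has no isolated point and the stable locus is open, so $Z$ has a positive-dimensional irreducible component $Z'$ meeting the stable locus; pick an integral curve $C\subseteq Z'$ through a stable point $x_0$ and not contained in the (closed) non-stable locus, normalize $C$, complete at a point over $x_0$, and enlarge $\widetilde\kappa$ (so that $N\mid[\widetilde\kappa:\kappa]$, and divisible enough for the twist below) to obtain a non-constant $\iota\colon\Spec R\to\mathrm{Fix}(V^N)$, $R=\widetilde\kappa\sembrack{s}$, with both points of $\Spec R$ mapping to stable bundles. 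Over the complete local base $\Spec R$ the Brauer and Picard obstructions vanish ($H^2_{\mathrm{et}}(\Spec R,\db G_m)\cong H^2_{\mathrm{et}}(\Spec\widetilde\kappa,\db G_m)=0$, $\mathrm{Pic}(R)=0$), so $\iota$ is carried by a family $\shf E$ of stable bundles on $\overline Y_R$ with $\det\shf E\cong\shf O$; since $\iota$ factors through $\mathrm{Fix}(V^N)$, the families $\shf E$ and $F_{geo}^{*N}\shf E$ have the same classifying arc, hence (being stable over a reduced local base) are isomorphic, and we fix such a $\varphi\colon F_{geo}^{*N}\shf E\xrightarrow{\ \sim\ }\shf E$. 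The dictionary yields $\rho\colon\pi_1(Y_{\widetilde\kappa})\to\mathrm{GL}(r,R)$ whose determinant is unramified; tensoring with a suitable unramified character over the enlarged base --- which does not alter $\rho|_{\pi_1(\overline Y)}$ --- we may take $\det\rho=1$. Then by the first consequence $\rho|_{\pi_1(\overline Y)}\bmod s$ is absolutely irreducible, and by the second, $\#\rho(\pi_1(\overline Y))=\infty$ because $\iota$ is non-constant; this is $(1)$.

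The main obstacle is not an isolated hard step but the faithful bookkeeping needed to run this dictionary relatively and in characteristic $p$: one must verify that the operative Frobenius structure is the arithmetic descent datum with the correct exponent $N$; that a point of the \emph{coarse} space $\shf M_Y(r,0)$ lying on the stable locus really lifts to a genuine family (with trivializable determinant) after the harmless base change to the complete local ring $\Spec R$, using vanishing of its Brauer and Picard groups; and, most easily overlooked, that finiteness of the geometric monodromy is detected exactly by constancy of the classifying arc --- which in the end rests on the elementary fact that a finite group has only finitely many irreducible representations over an algebraically closed field, so that a family of stable bundles trivialized by one fixed finite \'etale cover is necessarily isotrivial.
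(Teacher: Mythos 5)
Your proposal is correct in outline and follows essentially the same route as the paper: the Lange--Stuhler dictionary between $\mathrm{SL}(r,\widetilde{\kappa}\sembrack{s})$-representations and Frobenius-periodic bundles with trivialized determinant (Proposition \ref{SL-rep}, proved there by the same Artinian-level induction you sketch, Lemma \ref{etale-triv}), together with the two key equivalences --- residual stability $\Leftrightarrow$ residual absolute irreducibility of the geometric monodromy, and constancy of the family $\Leftrightarrow$ finiteness of the geometric monodromy (Lemmas \ref{cst rpr}, \ref{constant} and Proposition \ref{useful bijection}; your ``finitely many irreducibles of a finite group'' rigidity is in substance de Jong's lemma that the paper quotes). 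The differences are in packaging. For $(1)\Rightarrow(2)$ the paper spreads the power-series family out to a finitely generated $\widetilde{\kappa}$-algebra $A$ and uses the image of $\Spec A$ inside $\text{Fix}(V^N)$, while you argue directly with the (algebraized) arc and the connected component through $[\shf{E}_0]$; both work. For $(2)\Rightarrow(1)$ the paper simply cites Laszlo's GIT lemmas, whereas you supply the argument yourself (curve in the fixed locus through a stable point, completion, lifting of the classifying map using $\mathrm{Br}(\widetilde{\kappa}\sembrack{s})=0$ and $\mathrm{Pic}(\widetilde{\kappa}\sembrack{s})=0$, and uniqueness-up-to-scalar of isomorphisms between stable families to produce $\varphi$); this is genuine added content and is in substance what the cited lemmas of Laszlo do.

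One caution, at the only place where your write-up is looser than the paper: you phrase the dictionary over $\overline{Y}_R$ with an isomorphism $F_{geo}^{*N}\shf{E}\cong\shf{E}$ for the $\overline{\kappa}$-linear geometric Frobenius. Taken literally, the fixed-point construction does not then produce a finite local system (already in rank one with $\varphi=\mathrm{id}$ the equation $\varphi(F_{geo}^{*N}s)=s$ is solved by every constant of $\overline{\kappa}$); the Lange--Stuhler step must be run over a finite field of definition, i.e.\ for a bundle on $(Y\otimes_\kappa\widetilde{\kappa})\times_{\widetilde{\kappa}}\Spec\widetilde{\kappa}\sembrack{s}$ with the absolute $q^N$-power Frobenius, which is exactly how the paper sets it up. This is repairable inside your own argument --- the moduli problem, the fixed locus, the chosen curve and the arc are all defined over finite extensions of $\kappa$, and your Brauer/Picard vanishing is invoked for $\widetilde{\kappa}\sembrack{s}$, so the lift can and should be taken over $Y\times_\kappa\Spec R$ rather than over $\overline{Y}_R$ --- but the passage from a ``Weil-type'' Frobenius datum over $\overline{Y}$ to an honest representation of $\pi_1(Y\otimes_\kappa\widetilde{\kappa})$ is precisely the bookkeeping you defer, and it should be made explicit.
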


Any power of $V$ is  also called  Verschiebung by abuse of notations.  Because of the above equivalence, the question of looking for representations is converted to studying the fixed point locus of the Verschiebung. 
In \cite{Laszlo}, the expression of the Verschiebung $V_{C_0}$ for  $C_0$ was applied to locate a projective line $\bigtriangleup$ in $\shf{M}_{C_0}(2, 0)$ such that  $(V_{C_0}^2)|_{\bigtriangleup}$ is the identity map.  Here our observation is that $\bigtriangleup$ is the fixed point locus of the $G$-action on $\shf{M}_{C_0}(2, 0)$, where $G=\text{Aut}(C_0\otimes\db{F}_{2^2}/\db{F}_{2^2})=\db{Z}/2\db{Z} \times S_3$. Indeed, this property is  common to all genus-$2$ ordinary curves in characteristic $2$ with a $G$-action. Note that if a scheme $Y$ is defined over a finite field $\kappa$ with order $p^d$, we will call the $d^{th}$-power of the absolute Frobenius map of $Y$ as the geometric Frobenius map of $Y$ over $\kappa$.
\begin{thm}\label{fixline}
Let $X$ be a projective smooth ordinary curve of genus $2$ over a finite field $\kappa$ of characteristic $2$ with  $\text{Aut}(X/\kappa)= G$.  Let $\xymatrix{V: \shf{M}_X(2, 0)  \ar@{.>}[r] & \shf{M}_X(2, 0)}$ be the rational map defined by taking pullback of bundles with respect to the geometric Frobenius map of $X$ over $\kappa$. Then the fixed point locus of the $G$-action on $\shf{M}_{X}(2, 0)$ is a projective line, denoted by $\bigtriangleup_X$. And  $V|_{\bigtriangleup_X}=\text{id}_{\bigtriangleup_X}$. 
\end{thm}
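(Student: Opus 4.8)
The plan is to analyze the $G$-action on $\shf{M}_X(2,0)$ explicitly using the classical description of the moduli space for genus-$2$ curves, and then to exploit the commutation between $V$ and automorphisms. First I would recall Narasimhan–Ramanan's theorem: for a genus-$2$ curve $X$, the coarse moduli space $\shf{M}_X(2,0)$ is isomorphic to $\db{P}^3 = |2\Theta|$, the linear system of twice the theta divisor on $\operatorname{Jac}(X)$, in a way that is equivariant for $\operatorname{Aut}(X)$ (which acts on $\operatorname{Jac}(X)$ and hence on $|2\Theta|$). Under this identification the problem becomes one about a linear $G$-action on $\db{P}^3$, where $G = \db{Z}/2\db{Z}\times S_3$; the factor $\db{Z}/2\db{Z}$ is the hyperelliptic involution, which acts trivially on $\shf{M}_X(2,0)$ (it acts by $-1$ on $\operatorname{Jac}(X)$, fixing $|2\Theta|$ pointwise since $2\Theta$ is symmetric), so effectively I am studying the $S_3$-action. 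Here I would use that $X$ ordinary of genus $2$ in characteristic $2$ with this automorphism group has a very specific model (e.g. an Artin–Schreier-type equation), compute the induced linear representation of $S_3$ on the $4$-dimensional space $H^0(\operatorname{Jac}(X), 2\Theta)$, decompose it into isotypic components, and read off that the fixed locus $\db{P}(V^{G})$ is a line — i.e. the trivial isotypic piece is exactly $2$-dimensional. This is the computational heart and the step I expect to be the main obstacle: one must pin down the representation precisely (characteristic $2$ makes the representation theory of $S_3$ non-semisimple, so I would work instead with the geometry — identifying the two fixed points or the fixed pencil directly from the action on $2$-torsion / theta characteristics).

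Second, for the statement $V|_{\bigtriangleup_X} = \operatorname{id}$, I would use that the Verschiebung $V$ (pullback by geometric Frobenius) commutes with the action of any automorphism of $X$ defined over $\kappa$: if $\sigma\in G$ then $\sigma$ commutes with $F_{geo}$, hence $\sigma^* F_{geo}^* E \cong F_{geo}^*\sigma^* E$, so $V\circ \sigma = \sigma\circ V$ as rational maps on $\shf{M}_X(2,0)$. Consequently $V$ maps the $G$-fixed locus $\bigtriangleup_X$ into itself (wherever $V$ is defined), and $V|_{\bigtriangleup_X}$ is a rational self-map of $\db{P}^1$. To show it is the identity I would argue that $V|_{\bigtriangleup_X}$ fixes enough points: the points of $\bigtriangleup_X$ corresponding to split bundles $L\oplus L^{-1}$ with $L$ a $G$-invariant line bundle, or to bundles arising from $\db{P}^1$-data on the quotient $X/S_3$ — these are fixed by $V$ because Frobenius pullback is compatible with such direct-sum / pushforward constructions and with the rational structure. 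A rational self-map of $\db{P}^1$ fixing at least three points (or a nonconstant family) is the identity; alternatively, if $\bigtriangleup_X$ meets the semistable boundary / the locus of bundles induced from $\operatorname{Jac}$, $V$ acts there by the (identity on the relevant) Frobenius on a rational curve, forcing $V|_{\bigtriangleup_X}=\operatorname{id}$.

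An alternative, cleaner route for the second part: since $\bigtriangleup_X \cong \db{P}^1$ is defined over the prime field $\db{F}_2$ (the $G$-fixed locus is Galois-stable and $G$ is defined over $\kappa$), and $V$ is, Zariski-locally on the source, given by the relative Frobenius composed with an isomorphism coming from descent data, the restriction $V|_{\bigtriangleup_X}$ is an $\db{F}_2$-rational endomorphism of $\db{P}^1$ of degree a power of $2$ that has a fixed subscheme of the expected codimension; combined with a direct check at one or two explicit stable bundles on $\bigtriangleup_X$ (e.g. the Gunning bundle or a bundle pulled back from the elliptic/rational quotient), this pins it down as the identity. In either approach the one genuinely delicate point is handling the non-semisimplicity of modular representation theory in characteristic $2$ when computing the $G$-fixed locus; I would sidestep it by working with the concrete Narasimhan–Ramanan model and the action on $2$-torsion points of $\operatorname{Jac}(X)$ rather than with abstract character theory.
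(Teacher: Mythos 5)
Your proposal breaks down at the decisive step of the second half. From $V\circ g=g\circ V$ you correctly get $V(\bigtriangleup_X)\subset\bigtriangleup_X$, but you then conclude by asserting that ``a rational self-map of $\db{P}^1$ fixing at least three points is the identity.'' That is false unless the map has degree $1$: the map $x\mapsto x^q$ fixes all $q+1$ points of $\db{P}^1(\db{F}_q)$, and in the present situation $V|_{\bigtriangleup_X}$ is a priori a composition of $d$ maps each given by quadratic polynomials (Laszlo--Pauly), hence of degree up to $2^{d}$, so exhibiting three, or any finite number of, fixed points proves nothing. The whole point of the paper's argument is to first force the degree down to $1$: the stable bundle $F_{n*}B_n^{-1}$ (direct image of the inverse of Raynaud's theta characteristic) carries a $G$-structure, so its class lies on $\bigtriangleup_{X(n+1)}$, and its Frobenius pullback is unstable, so it is a \emph{base point} of $V_n$; since $V_n$ is given by quadrics, the two quadrics cut down to the fixed line acquire a common linear factor and each $V_n|_{\bigtriangleup_{X(n+1)}}$ becomes linear. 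Only after this reduction do three fixed points imply the identity. Your ``alternative, cleaner route'' does not repair this: $V$ is not ``relative Frobenius composed with an isomorphism from descent data'' (that would make it purely inseparable), it is the separable rational map defined by quadrics, and nothing in that paragraph bounds its degree on $\bigtriangleup_X$.

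There is a second, smaller gap: the points you want to use as fixed points are not fixed ``because Frobenius pullback is compatible with direct sums.'' For a split point $[L\oplus L^{-1}]$ one has $V[L\oplus L^{-1}]=[L^{2}\oplus L^{-2}]$, which coincides with $[L\oplus L^{-1}]$ only when $L\cong\shf{O}_X$ or $L^{3}\cong\shf{O}_X$; the paper must prove exactly this $3$-torsion property for the two nontrivial $G$-invariant split bundles $E_{1,X},E_{2,X}$, and does so by a divisor computation using that the fixed points of $\sigma$ are ramification points of index $3$ of $X\ra X/\langle\sigma\rangle\simeq\db{P}^1$ (Lemma \ref{fixfro}). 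Finally, for the first half your plan (decompose the $S_3$-representation on $H^0(2\Theta)$ and read off a $2$-dimensional invariant subspace) is a genuinely different route from the paper, which avoids any representation-theoretic computation by the characteristic-$p$ observation that the $G$-fixed locus in $\db{P}^3$ is a linear subspace (Lemma \ref{ptrick}) together with the finiteness of $(\text{Km}_X)^G$; but as written you leave the decisive computation unperformed and acknowledge it as the main obstacle, so this part is a plan rather than a proof.
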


Combining with Theorem \ref{poslocus}, for every curve in Theorem \ref{fixline}, there exist  representations of the fundamental group with an infinite geometric monodromy. 

A large part of the proof of Theorem \ref{fixline}  can be applied to higher characteristic, particularly the application of a group action in locating a sublocus in the moduli space. However, when considering whether the restriction of the Verschiebung to the sublocus is reduced to a linear map, the condition regarding the existence of a single base point on the sublocus is sufficient only in charateristic $2$. In other characteristic, more is required to ensure that the restriction of the Verschiebung is the identity.

The note is organized as follows. In Section \ref{representation}, we establish equivalences among different categories under concern. In Section \ref{moduli}, we prove Theorem \ref{fixline}.   In Section \ref{highchar}, we discuss the case of characteristic $p>2$.

\textbf{Acknowledgment:} I would like to thank my adviser Aise Johan de Jong for introducing me the topic and for sharing many ideas and having many enlightening discussions with me. I also would like to thank Professor H\'el\`ene Esnault and Professor Adrian Langer for pointing out several confusing places in an earlier version of this note  and express my gratitude to Professor Deligne for some helpful suggestions.

\section{Representations and Frobenius-periodic vector bundles}
\label{representation}
 In this section, we explain how representations are related to families of bundles. Indeed, there are  equivalences among the categories of Frobenius-periodic vector bundles, smooth \'etale sheaves and representations. 

Notation: $\kappa$ is a finite field of order $q=p^d$,   $S=\Spec\kappa\sembrack{s}$, $\shf{S}=\Spf \kappa\sembrack{s}$ and $S_n= \Spec\kappa\sembrack{s}/(s^n)$ for $n\in\db{Z}^+$;  $Y$ is a noetherian $\kappa$-scheme and $F_Y$ is the absolute Frobenius of $Y$. By ``vector bundle", we mean a locally free sheaf of finite rank.

\subsection{}  
Our definition of smooth \'etale $\kappa\sembrack{s}$-sheaves is similar to that of  a lisse $l$-adic sheaf in \cite[Chap.V, \S 1]{Milne}. When $Y$ is connected,  there is  an equivalence  between the category of locally free smooth $\kappa\sembrack{s}$-sheaves over $Y_{et}$ and the category of continuous $\pi_1(Y)$-modules that are free $\kappa\sembrack{s}$-modules of finite rank, denoted by
$\shf{C}_{1, Y_{et}}\rightleftharpoons \shf{C}_{2, \pi_1(Y)}$.


\begin{defn}\label{frovb-0}
 A vector bundle $\shf{F}$ over $Y\times_{\kappa} S$ (resp. $Y\times_{\kappa} S_n$) is said to be  \textit{Frobenius-periodic}  if $\exists$ an isomorphism $\xi:\shf{F} \ra (F_Y^d\times\text{id}_S)^*\shf{F}$ (resp. $\xi:  \shf{F}\ra (F_Y^d\times\text{id}_{S_n})^*\shf{F}$), denoted by $(\shf{F}, \xi)$.  
A \textit{Frobenius-periodic vector bundle}   over $Y\times_{\kappa} \shf{S}$ is a projective system $(\shf{F}, \xi)=((\shf{F}_n, \xi_n))_{n\in\db{Z}^+}$ of sheaves over $|Y_{zar}|$ such that for each $n$, $\shf{F}_n$ is a Frobenius-periodic vector bundle  over $Y\times_{\kappa} S_n$, the given map $\shf{F}_{n+1}\ra\shf{F}_n$ is compatible with $\xi_n$'s and is isomorphic to the natural map $\shf{F}_{n+1}\ra \shf{F}_{n+1}\otimes_{\kappa\sembrack{s}}\kappa\sembrack{s}/(s^n)$.  
\end{defn}


\begin{defn}\label{trvbdl}
Given $(\shf{F}_n, \xi_n)$ over $Y\times_{\kappa} S_n$. For any morphism $Z\overset{f}\ra Y$, $((f\times\text{id}_{S_n})^*\shf{F}_n, (f\times\text{id}_{S_n})^*\xi_n)$ can be viewed as a Frobenius-periodic vector bundle over $Z\times_\kappa S_n$, denoted by $f^*(\shf{F}_n, \xi_n)$ or $(f^*\shf{F}_n, f^*\xi_n)$.\\
A section $s\in\Gamma(Y\times_\kappa S_n, \shf{F}_n)$ is said to be \textit{fixed by $\xi$} if $\xi_n(s)=(F_Y^d\times\text{id}_{S_n})^*s\doteq 1\otimes s$; $(\shf{F}_n, \xi_n)$ is said to be \textit{trivializable} if $\shf{F}_n$ has a global basis fixed by $\xi_n$; it is said to be \textit{\'etale trivializable} if $\exists: Y_n\overset{f}\ra Y$ a finite \'etale morphism such that $f^*(\shf{F}_n, \xi_n)$ is trivializable, in this case we also say that $Y_n/Y$ trivializes $(\shf{F}_n, \xi_n)$. 
\end{defn}

\begin{rem}\label{bdl-shf}
Given $(\shf{F}_n, \xi_n)$ over $Y\times_{\kappa} S_n$, an \'etale sheaf can be defined as follows:
\[(U\overset{f}\ra Y)\in\text{Et}(Y)\mapsto  \{s\in \Gamma(U, f^*\shf{F})| f^*(\xi)(s)=1\otimes s \}.
\]
 We will see that it is a locally free smooth $\kappa\sembrack{s}/(s^n)$-sheaf in Lemma \ref{etale-triv} .\end{rem}

Recall from \cite[Appendix I]{Freitag-Kiehl} that a covering space of $Y$ is a finite \'etale morphism $f: Z\ra Y$ and it is Galois if  $\#\text{Aut}(Z/Y)=\text{deg}(f)$. 

\begin{lem}\label{etale-triv}
  Given $(\shf{F}, \xi)=((\shf{F}_n, \xi_n))_{n\in\db{Z}^+}$ over $Y\times_{\kappa} \shf{S}$. Then there exists a family of covering spaces $Y\leftarrow Y_1\leftarrow Y_2\leftarrow \cdots \leftarrow Y_n\leftarrow\cdots$ such that $Y_n/Y$ trivializes $(\shf{F}_n, \xi_n)$. 
\end{lem}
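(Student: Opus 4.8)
The plan is to take $Y_n\to Y$ to be the scheme parametrising the frames of $\shf{F}_n$ that are fixed by $\xi_n$; the content of the lemma is that this scheme is a covering space of $Y$, and the proof is a form of Frobenius descent (Lang's theorem). Fix $n\geq 1$, write $A_n=\kappa\sembrack{s}/(s^n)$ (a finite local $\kappa$-algebra) and $\Phi_n=F_Y^d\times\mathrm{id}_{S_n}\colon Y\times_\kappa S_n\to Y\times_\kappa S_n$, and consider the contravariant functor $\mathcal{T}_n$ on $Y$-schemes sending $T$ to the set of $A_n$-bases $(v_1,\dots,v_r)$ of the pullback of $\shf{F}_n$ to $T\times_\kappa S_n$ that are fixed by the pullback of $\xi_n$. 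I will show $\mathcal{T}_n$ is representable by a scheme $Y_n$ that is finite, \'etale and surjective over $Y$; by construction the pullback of $\shf{F}_n$ to $Y_n$ then has a basis fixed by $\xi_n$, so $Y_n/Y$ trivializes $(\shf{F}_n,\xi_n)$. Since $Y\times_\kappa S_n\to Y$ is a nilpotent thickening, topological invariance of the \'etale site lets me pass freely between finite \'etale $Y$-schemes and finite \'etale $(Y\times_\kappa S_n)$-schemes, so it is harmless that $\mathcal{T}_n$ is a functor ``living over $Y\times_\kappa S_n$''.

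Representability and the assertions on $Y_n\to Y$ are Zariski-local on $Y$, so I restrict to an affine open $U\subseteq Y$ over which $\shf{F}_n$ has a basis $e=(e_1,\dots,e_r)$, and write the matrix of $\xi_n$ in the bases $e$ and $\Phi_n^*e$ as $M\in\mathrm{GL}_r(\mathcal{O}(U\times_\kappa S_n))$. A change of basis $v=e\cdot g$ with $g\in\mathrm{GL}_r(\mathcal{O})$ is fixed by $\xi_n$ exactly when $g^{(q)}=Mg$, where $g^{(q)}$ is the matrix obtained by applying $\Phi_n^{*}$ to the entries of $g$; note that along $Y$ the map $\Phi_n^{*}$ is the relative $q$-power Frobenius over $\kappa$ (it fixes $\kappa\subseteq\mathcal{O}_Y$ because $\kappa=\db{F}_q$, and it fixes the $A_n$-coordinate). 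Thus over $U$ the functor $\mathcal{T}_n$ is identified with the fibre over $M$ of the Lang-type morphism $\lambda\colon g\mapsto g^{(q)}g^{-1}$ on $\mathrm{GL}_{r}$ over $U\times_\kappa S_n$. The morphism $\lambda$ is \'etale, since the relative Frobenius has vanishing differential and hence $d\lambda=-\mathrm{id}$; it is finite, being a torsor under a finite locally constant group scheme ($\mathrm{GL}_r$ of a finite ring); and it is surjective, which after d\'evissage along the powers of $s$ reduces to Lang's theorem for $\mathrm{GL}_{r,\db{F}_q}$ (the bottom layer) and to the surjectivity of the Artin--Schreier map $x\mapsto x^{(q)}-x$ on suitable vector groups (the successive layers). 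Hence $\lambda^{-1}(M)\to U\times_\kappa S_n$ is finite \'etale surjective, i.e. a covering space of $U$, and these local representing schemes glue to the desired $Y_n\to Y$.

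For the tower, the reduction maps $\shf{F}_{n+1}\twoheadrightarrow\shf{F}_{n+1}\otimes_{A_{n+1}}A_n\cong\shf{F}_n$, which by hypothesis are compatible with the $\xi$'s, carry $\xi_{n+1}$-fixed bases to $\xi_n$-fixed bases and therefore induce $Y$-morphisms $Y_{n+1}\to Y_n$; together with $Y_1\to Y$ these form the required projective system $Y\leftarrow Y_1\leftarrow Y_2\leftarrow\cdots$, each $Y_n$ trivializing $(\shf{F}_n,\xi_n)$, and each transition map is automatically finite \'etale as a $Y$-morphism between covering spaces. Finally, passing to the Galois closure of each $Y_n\to Y$ (done compatibly up the tower) we may assume every $Y_n/Y$ is Galois; the pullback to $Y_n$ of the \'etale sheaf of Remark \ref{bdl-shf} is then the constant sheaf $A_n^{\oplus r}$, so by \'etale descent that sheaf is the locally free smooth $\kappa\sembrack{s}/(s^n)$-sheaf of rank $r$ attached to the monodromy representation $\mathrm{Aut}(Y_n/Y)\to\mathrm{GL}_r(A_n)$, which settles the claim made there.

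The only real obstacle is the surjectivity of $\lambda$ in the second paragraph, equivalently the existence of $\xi_n$-fixed frames \'etale-locally on $Y$; this is exactly Lang's theorem and is where the finiteness of $\kappa$ is used. Everything else — the \'etaleness of $\lambda$, the gluing of the local frame schemes, the construction and \'etaleness of the transition maps $Y_{n+1}\to Y_n$, and the descent computation of the associated sheaf — is formal, the only minor care being the identification of $\Phi_n^{*}$ with a relative Frobenius (so that the Lang formalism applies verbatim) and the use of topological invariance of the \'etale site to move between $Y$ and the thickening $Y\times_\kappa S_n$.
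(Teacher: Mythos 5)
Your construction is correct, but it is packaged differently from the paper's argument, which proceeds by induction on $n$: the base case $n=1$ is quoted from Lange--Stuhler, and the induction step produces, over each affine open of $Y_n$, an explicit Artin--Schreier-type cover $\Spec\shf{O}_{Y_n}(U)[m_{ij}]/(D_n+\triangle_{n+1}-\triangle_{n+1}^{(q)})$ correcting a fixed basis of $\shf{F}_n$ to one of $\shf{F}_{n+1}$, and then globalizes by observing that the associated \'etale sheaf is locally free and smooth. You instead build each $Y_n$ globally and canonically in one step, as the scheme of $\xi_n$-fixed frames, i.e.\ the fibre of the Lang map $g\mapsto g^{(q)}g^{-1}$ on (the Weil restriction along $A_n/\kappa$ of) $\mathrm{GL}_r$; \'etaleness from the vanishing differential of Frobenius, finiteness from the torsor structure under the finite constant group $\mathrm{GL}_r(A_n)$, and surjectivity from Lang plus an Artin--Schreier d\'evissage along powers of $s$ --- which is in effect the same mechanism as the paper's induction step, carried out inside one proof and also re-proving the $n=1$ input rather than citing it. What your route buys is a canonical tower (the maps $Y_{n+1}\to Y_n$ come for free from reducing fixed frames mod $s^n$, with no need to arrange the factorization through $Y_{n-1}$ in an induction) and, as a by-product, the claim of Remark \ref{bdl-shf} that the associated sheaf is locally free and smooth; what it costs is a bit more foundational bookkeeping (representability, Weil restriction, topological invariance of the \'etale site across the thickening $Y\hookrightarrow Y\times_\kappa S_n$). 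One point to state more carefully: surjectivity of $\lambda^{-1}(M)\to U$ must be checked at \emph{all} geometric points of $U$, whose residue fields are algebraically closed but need not be $\overline{\db{F}}_q$, so you should either invoke the standard extension of Lang's theorem to an arbitrary algebraically closed extension of $\db{F}_q$ equipped with the coordinatewise $q$-power endomorphism (the usual proof goes through verbatim, since that endomorphism is a morphism of varieties with zero differential), or first establish finiteness of the frame scheme directly (e.g.\ by the monomial-reduction argument for the equations $h^{(q)}=Mh$) and then check nonemptiness of fibres only over closed points, where classical Lang applies; also, your finiteness-via-torsor phrasing mildly presupposes the surjectivity it precedes, so order the steps accordingly. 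These are matters of exposition, not gaps in the method.
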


\begin{proof} Prove by induction on $n$. 
Case $n=1$ is exactly proved in \cite[Proposition 1.2]{Lange-Stuhler}.\\
Induction step: assume that there is a covering space $Y_n\ra Y$ that factors through $Y_{n-1}$   and trivializes $(\shf{F}_n, \xi_n)$.
Let  $\{e^n_1, ... , e^n_r\}$ be a basis of
 $\shf{F}_{n+1}|_{U\times_\kappa S_{n+1}}$ for an affine open subscheme $U\subset Y_n$ s.t. it extends to a global basis of $\shf{F}_n|_{Y_n\times_\kappa S_n}$ fixed by $\xi_n$, i.e. 
\[ \xi_{n+1}\{e^n_1, ... , e^n_r\}= \{(F_{Y_n}^d\times\text{id}_{S_{n+1}})^*e^n_1, ... ,(F_{Y_n}^d\times\text{id}_{S_{n+1}})^* e^n_r\} (I_{(r)}+s^nD_n), \]
for some $D_n\in\text{Mat}(r\times r, \shf{O}_{Y_n}(U))$. To find a basis $\{e^{n+1}_1, ... , e^{n+1}_r\}$  fixed by $\xi_{n+1}$ and of the form $ \{e^n_1, ... , e^n_r\} (I_{(r)}+s^n\triangle_{n+1})$,   it is equivalent to find   $\triangle_{n+1}=(m_{ij})$ such that 
\[   D_{n}+\triangle_{n+1}=\triangle_{n+1}^{(q)}, \] 
where $\triangle_{n+1}^{(q)}=(m_{ij}^{q})$. Then define $U_{n+1}=\Spec \shf{O}_{Y_n}(U)[m_{11}, ... , m_{rr} ]/(D_n+\triangle_{n+1}-\triangle_{n+1}^{(q)})$. Clearly $U_{n+1}\ra U$ is a covering space and trivializes $(\shf{F}_{n+1}, \xi_{n+1})$.  Therefore, the \'etale sheaf associated to $(\shf{F}_{n+1}, \xi_{n+1})$ is locally free and smooth, by \cite[Chap.V, \S1]{Milne}, there exists a covering space $Y_{n+1}\ra Y_n\ra Y$ that trivializes $(\shf{F}_{n+1}, \xi_{n+1})$.
\end{proof}

\begin{rem} 
Actually for an affine open covering $\{ U\}$ of $Y_n$, the local covering spaces $\{U_{n+1}\ra U\}$ can be built up canonically to a covering space $Y_{n+1}\ra Y_n$.
\end{rem}

The trivial line bundle  with a non-trivializable Frobenius structure may  be trivialized by field extension. To avoid such cases, we give the following definition.  

\begin{defn} \label{strfro}
  $(\shf{F}, \xi)$    over $Y\times_{\kappa} S$ (resp. $Y\times_{\kappa} S_n$)  is said to be \textit{strictly Frobenius-periodic} if $(\text{det}(\shf{F}), \text{det}(\xi))$ is trivializable, denoted by $(\shf{F}, \xi, \text{det}=1)$;  
 $ ((\shf{F}_n, \xi_n))_{n\in\db{Z}^+}$  over $Y\times_\kappa \shf{S}$ is said to be \textit{strictly Frobenius-periodic} if every $(\shf{F}_n, \xi_n)$ is. 
\end{defn}

\begin{prop}\label{SL-rep}
(1) Let $\shf{C}_{1, Y_{et}}$ be  the category  of locally free smooth $\kappa\sembrack{s}$-sheaves over $Y_{et}$ and $\shf{C}_{3, Y_{zar}}$ be the category of Frobenius-periodic vector bundles over $Y\times_{\kappa} \shf{S}$.  Then there is an equivalence  
$\shf{C}_{1, Y_{et}}\rightleftharpoons \shf{C}_{3, Y_{zar}}$.\\
(2) Asssume that  $Y$ is connected.  Let $\shf{C}^{sl}_{2, \pi_1(Y)}$ be the full subcategory of $\shf{C}_{2, \pi_1(Y)}$ whose objects are SL-representations of $\pi_1(Y)$ and $\shf{C}^{str}_{3, Y_{zar}}$ be the full subcategory of $\shf{C}_{3, Y_{zar}}$ whose objects are strictly Frobenius-periodic vector bundles over $Y\times_\kappa \shf{S}$. Then there is an equivalence 
\[\shf{C}^{sl}_{2, \pi_1(Y)} \rightleftharpoons \shf{C}^{str}_{3, Y_{zar}}, \quad \rho\leftrightarrow (\shf{F}_\rho, \xi_\rho, \text{det}=1)\quad \text{or} \quad \rho_{(\shf{F}, \xi)}\leftrightarrow (\shf{F}, \xi,\text{det}=1).\]
\end{prop}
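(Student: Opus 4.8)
The plan is to realise both equivalences by explicit quasi-inverse functors, and to deduce (2) from (1) together with the equivalence $\shf{C}_{1, Y_{et}}\rightleftharpoons\shf{C}_{2, \pi_1(Y)}$ recalled above, the only genuinely new input being a determinant bookkeeping. The engine for (1) is faithfully flat descent along the trivializing covers produced by Lemma \ref{etale-triv}, carrying along the Frobenius structure.

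For (1), in one direction I send a Frobenius-periodic bundle $(\shf{F}, \xi)=((\shf{F}_n, \xi_n))_{n}$ to the projective system of \'etale sheaves of $\xi_n$-fixed sections of Remark \ref{bdl-shf}. By Lemma \ref{etale-triv} each $(\shf{F}_n, \xi_n)$ is trivialized by a covering space $f_n\colon Y_n\ra Y$, and on $Y_n$ the sheaf of fixed sections becomes the constant sheaf $(\kappa\sembrack{s}/(s^n))^{\oplus r}$; combined with descent along $Y_n\ra Y$ this shows the associated sheaf is locally free and smooth over $\kappa\sembrack{s}/(s^n)$, and passing to the inverse limit yields an object of $\shf{C}_{1, Y_{et}}$. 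In the other direction, a locally free smooth $\kappa\sembrack{s}$-sheaf is, at level $n$, trivialized by some (Galois, after refining) covering space $Y_n\ra Y$ and is thus given by descent data consisting of transition matrices in $\text{GL}(r, \kappa\sembrack{s}/(s^n))$ on $Y_n\times_Y Y_n$. I equip the trivial bundle $\shf{O}_{Y_n\times_\kappa S_n}^{\oplus r}$ with its canonical Frobenius structure $\xi^{\mathrm{can}}$, coming from the identification $(F_{Y_n}^d\times\text{id}_{S_n})^*\shf{O}^{\oplus r}\cong\shf{O}^{\oplus r}$; since $F_{Y_n}^d\times\text{id}_{S_n}$ fixes the coefficient ring $\kappa\sembrack{s}/(s^n)$ pointwise, the transition matrices are fixed by $\xi^{\mathrm{can}}$, so the data descends along the finite \'etale map $Y_n\times_\kappa S_n\ra Y\times_\kappa S_n$ to a Frobenius-periodic bundle $(\shf{F}_n, \xi_n)$ over $Y\times_\kappa S_n$, functorial in $n$ and compatible with reduction mod $s^n$, hence an object of $\shf{C}_{3, Y_{zar}}$. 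One then checks these two constructions are mutually quasi-inverse: taking fixed sections of the descended bundle recovers the \'etale sheaf — this is \cite[Proposition 1.2]{Lange-Stuhler} applied at each level over the trivializing cover — and conversely a Frobenius-periodic bundle is reconstructed by gluing the canonical trivial model along the cover of Lemma \ref{etale-triv}.

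For (2), with $Y$ connected I compose $\shf{C}_{1, Y_{et}}\rightleftharpoons\shf{C}_{3, Y_{zar}}$ with $\shf{C}_{2, \pi_1(Y)}\rightleftharpoons\shf{C}_{1, Y_{et}}$, so it suffices to show that under the resulting equivalence $\rho\leftrightarrow(\shf{F}_\rho, \xi_\rho)$ one has: $\rho$ takes values in $\text{SL}$ iff $(\det\shf{F}_\rho, \det\xi_\rho)$ is trivializable. The functors of (1) are compatible with determinants, since $\bigwedge^r$ commutes with pullback by $F_Y^d$ and with the formation of fixed-section sheaves; hence $(\det\shf{F}_\rho, \det\xi_\rho)$ corresponds to the rank-one $\pi_1(Y)$-module $\det\rho\colon\pi_1(Y)\ra\kappa\sembrack{s}^\times$. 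It then remains to observe that a rank-one Frobenius-periodic bundle is trivializable — i.e.\ has a global $\xi_n$-fixed basis at every level $n$ — exactly when its associated character is trivial: a global fixed basis forces the sheaf of fixed sections to be the constant sheaf $\kappa\sembrack{s}$, hence the character trivial, and conversely a trivial character gives such a basis. Since $\det\rho$ is trivial iff $\rho$ lands in $\text{SL}(r, \kappa\sembrack{s})$, this identifies $\shf{C}^{sl}_{2, \pi_1(Y)}$ with $\shf{C}^{str}_{3, Y_{zar}}$.

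The main obstacle I expect is the careful handling of the Frobenius structure under descent in (1): one must verify that $\xi^{\mathrm{can}}$ is genuinely preserved by the descent data — which rests on $F_{Y_n}^d\times\text{id}_{S_n}$ fixing $\kappa\sembrack{s}/(s^n)$ and on the transition matrices being ``constant'' — and that the descended $(\shf{F}_n, \xi_n)$ assemble into a projective system of the precise shape demanded by Definition \ref{frovb-0}, in particular that $\shf{F}_{n+1}\ra\shf{F}_n$ is the reduction modulo $s^n$. The verification that the two functors are quasi-inverse, though conceptually just faithfully flat descent plus \cite[Proposition 1.2]{Lange-Stuhler}, still needs some care at the level of the transition maps of the systems; the determinant compatibility and the rank-one statement in (2) are then formal.
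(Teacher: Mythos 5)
Your proposal is correct and follows essentially the same route as the paper: the functor $\shf{C}_{3, Y_{zar}}\ra\shf{C}_{1, Y_{et}}$ via the \'etale sheaf of $\xi$-fixed sections (Remark \ref{bdl-shf} and Lemma \ref{etale-triv}), and the inverse functor by Galois descent of the trivial bundle with its canonical Frobenius structure along a trivializing covering space, which is exactly the paper's composition $\shf{C}_{1, Y_{et}}\ra\shf{C}_{2,\pi_1(Y)}\ra\shf{C}_{3, Y_{zar}}$ with the Mumford reference spelled out as descent data with constant transition matrices. For (2) the paper instead checks directly that the transition matrices $M_{g^{-1}}$ over a Galois trivializing cover land in $\text{SL}(r,\kappa\sembrack{s}/(s^n))$, while you reduce to the rank-one case (determinant line trivializable iff $\det\rho$ trivial); this is the same determinant bookkeeping, just packaged differently.
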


\begin{proof} 
(1)May assume that $Y$ is connected.
The functor $\shf{C}_{3, Y_{zar}}\ra \shf{C}_{1, Y_{et}}$ is clear from Remark \ref{bdl-shf} and Lemma \ref{etale-triv}. The functor $\shf{C}_{1, Y_{et}} \ra \shf{C}_{3, Y_{zar}}$ is  the composition $\shf{C}_{1, Y_{et}} \ra \shf{C}_{2, \pi_1(Y)} \ra \shf{C}_{3, Y_{zar}}$. It follows from Galois descent theory, see \cite[\S 12, Thereom 1]{Mumford}.\\
(2) We only need to show that $(\shf{F}_n, \xi_n, \text{det}=1)$ induces a SL-representation. Let $Y_n\overset{f_n}\ra Y$ be a Galois covering space  that trivializes $(\shf{F}_n, \xi_n)$ by Lemma \ref{etale-triv}. Then the induced representation is the composition $ \pi_1(Y, \overline{y})\ra\text{Gal}(Y_n/Y)\ra\text{GL}(r, \kappa\sembrack{s}/(s^n) )$; with a basis $\{ e^n_1, \cdots, e^n_r\} $ of $f_n^*\shf{F}_n$ preserved by $f_n^*\xi_n$,  the latter is defined by $g\mapsto M_{g^{-1}}$, where $(g^{-1}\times\text{id}_{S_n})^*\{e^n_1,\cdots, e^n_r \}=\{e^n_1,\cdots, e^n_r \}M_{g^{-1}}$. Thus  $M_{g^{-1}}\in \text{SL}(r,\kappa\sembrack{s}/(s^n) )$.
\end{proof}

\subsection{}\label{note}
Now we turn to the case that $Y$ is a projective smooth geometrically connected scheme over $\kappa$. Let $\overline{Y}=Y\times_{\kappa}\Spec\overline{\kappa}$.   In this case,  the categories of  vector bundles over $Y\times_\kappa \shf{S}$ and over $Y\times_\kappa S$  are equivalent, by Grothendieck's existence theorem.  Given  $(\shf{F}, \xi)$ over $Y\times_\kappa S$.  We say that $(\shf{F}, \xi)$ is   \textit{constant} if it is isomorphic to the pullback of $\shf{F}\text{ mod } s$. We refer to \cite{Huybrechts-Lehn} and \cite{Isaacs} regarding  the definition of  geometrically slope-stable vector bundles and that of an absolutely irreducible representation.  Recall that

\begin{lem} \label{cst rpr} (1) \cite[Lemma 3.15]{de Jong} Let $\rho: H\ra \text{GL}(r, K\sembrack{s})$ be a representation of a finite group $H$, where $K$ is a field. If $\rho_0= \rho\text{ mod }s$ is absolutely irreducible, then $\rho\simeq \rho_0\otimes_KK\sembrack{s}$. 
(2) \cite[Lemma 2.7]{de Jong} 
Let $1\ra \Gamma\ra H\ra\hat{Z}\ra 0$ be an exact sequence of profinite groups. Suppose that $\rho: H\ra \text{SL}(V)$ is a continuous representation such that $\rho|_\Gamma$ is absolutely irreducible. Then $\#\rho(\Gamma)<\infty \Leftrightarrow \#\rho(H)<\infty$.
\end{lem}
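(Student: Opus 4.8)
The two parts rest on different mechanisms, so I would prove them separately. For part (1) I would first invoke Burnside's theorem: since $\rho_0=\rho\bmod s$ is absolutely irreducible, the $K$-algebra spanned by $\rho_0(H)$ is all of $\text{End}_K(V_0)=M_r(K)$ and $\text{End}_{K[H]}(V_0)=K$. Lifting along the surjection $K\sembrack{s}\ra K$ by Nakayama's lemma, the $K\sembrack{s}$-subalgebra spanned by $\rho(H)$ is the full matrix ring $M_r(K\sembrack{s})$; in particular $\rho$ is absolutely irreducible over $K\sembrack{s}$ with commutant $K\sembrack{s}$. Next I would show that the reduction $\rho(H)\ra\rho_0(H)$ is an isomorphism. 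Its kernel $P=\rho(H)\cap(1+sM_r(K\sembrack{s}))$ is a finite normal subgroup, and the congruence subgroup $1+sM_r(K\sembrack{s})$ is pro-$p$ in characteristic $p$ (from $(1+sA)^{p^k}=1+s^{p^k}A^{p^k}$), so $P$ is a $p$-group. Extending scalars to $L=\overline{K(\!(s)\!)}$, a finite $p$-group in characteristic $p$ has nonzero fixed vectors, so $(V\otimes_{K\sembrack{s}}L)^P\neq 0$; since $P$ is normal this subspace is $\rho(H)$-stable, and absolute irreducibility over $L$ forces it to be everything, whence $P=1$.

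The remaining and, I expect, hardest step is to conjugate $\rho$ into the constant lift $\tilde\rho_0=\rho_0\otimes_K K\sembrack{s}$. I would form the $K\sembrack{s}$-module $W=\text{Hom}_{K\sembrack{s}[H]}(V_{\tilde\rho_0},V_\rho)$. Left-exactness of invariants embeds $W/sW$ into $\text{Hom}_{K[H]}(V_0,V_0)=K$, and as a finitely generated torsion-free module over the discrete valuation ring $K\sembrack{s}$ the module $W$ is free of rank $\le 1$; a generator reducing to a nonzero multiple of $\text{id}_{V_0}$ is then an isomorphism of representations, since its determinant is a unit. The crux is to produce such a lift of $\text{id}_{V_0}$. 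The obstruction to improving an isomorphism from level $s^n$ to level $s^{n+1}$ lives in $H^1(H,\text{ad}\,\rho_0)$, which need not vanish, so a purely first-order deformation argument is insufficient; indeed over a truncation $K[s]/(s^2)$ genuinely nonconstant lifts exist (e.g. $1\mapsto 1+s$ for $\db{Z}/p\db{Z}$). The point I would exploit is that $\rho$ is a representation of a \emph{finite} group over the \emph{full} ring $K\sembrack{s}$: the exact relations $\rho(h)^{\text{ord}(h)}=1$ fail for any nonconstant candidate, because $(1+s^nA)^{p^k}=1+s^{np^k}A^{p^k}$ never returns to $1$, and combined with the previous step (which identifies $\rho(H)$ with $\rho_0(H)$) this should force the successive obstruction classes to vanish.

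For part (2), the implication $\#\rho(H)<\infty\Rightarrow\#\rho(\Gamma)<\infty$ is immediate from $\rho(\Gamma)\subseteq\rho(H)$. For the converse, assume $\rho(\Gamma)$ is finite. Since $\Gamma\triangleleft H$, conjugation gives a homomorphism $\rho(H)\ra\text{Aut}(\rho(\Gamma))$ into a finite group, whose kernel is the centralizer $C_{\rho(H)}(\rho(\Gamma))$. By Schur's lemma, absolute irreducibility of $\rho|_\Gamma$ makes $C_{\text{SL}(V)}(\rho(\Gamma))$ the group of scalar matrices in $\text{SL}(V)$, namely the finite group $\mu_r$ of $r$-th roots of unity. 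Hence $\rho(H)$ is an extension of a subgroup of the finite group $\text{Aut}(\rho(\Gamma))$ by a subgroup of $\mu_r$, and is therefore finite.
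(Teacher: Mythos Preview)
The paper does not prove this lemma at all; both parts are simply quoted from \cite{de Jong} (Lemmas~3.15 and~2.7 there), so there is no argument in the paper to compare against.

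Your proof of part~(2) is correct and is the standard argument: normality of $\Gamma$ gives a map $\rho(H)\to\operatorname{Aut}(\rho(\Gamma))$, Schur's lemma for the absolutely irreducible $\rho|_\Gamma$ forces the kernel into the scalar matrices $\mu_r\subset\operatorname{SL}(V)$, and both ends are finite. (One should perhaps remark that $\mu_r$ is finite in the coefficient ring at hand, but in the paper's setting $K\sembrack{s}$ with $K$ finite this is immediate.)

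In part~(1), your first two steps are fine and work in every characteristic (in characteristic~$0$ the group $1+sM_r(K\sembrack{s})$ is torsion-free, so $P=1$ directly). The gap is the last step: you write that the identification $\rho(H)\cong\rho_0(H)$ ``should force the successive obstruction classes to vanish'', but you have not said how, and as you yourself observe $H^1(H,\mathrm{ad}\,\rho_0)$ need not vanish, so a naive level-by-level deformation argument does not close. One way to finish, using what you have already proved: since each $\rho(h)$ has finite order, its eigenvalues in $\overline{K(\!(s)\!)}$ are roots of unity and hence lie in $\overline{K}$, so every coefficient of the characteristic polynomial of $\rho(h)$ lies in $\overline{K}\cap K\sembrack{s}=K$ and therefore equals the corresponding coefficient for $\rho_0(h)$. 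Brauer--Nesbitt over the field $K(\!(s)\!)$ then gives $V_\rho\otimes K(\!(s)\!)\cong V_{\tilde\rho_0}\otimes K(\!(s)\!)$, and any $K\sembrack{s}[H]$-lattice in the latter is $M_r(K\sembrack{s})$-stable, hence of the form $s^n K\sembrack{s}^r$, which yields $V_\rho\cong V_{\tilde\rho_0}$ over $K\sembrack{s}$. A slicker alternative, bypassing your step~2 entirely, is to look at the image $A$ of $K\sembrack{s}[H]$ in $M_r(K\sembrack{s})\times M_r(K\sembrack{s})$ under $(\rho,\tilde\rho_0)$: since $A/sA$ is the diagonal copy of $M_r(K)$, the free $K\sembrack{s}$-module $A$ has rank $r^2$, so both projections $A\to M_r(K\sembrack{s})$ are isomorphisms and the two representations become isomorphic by Morita.
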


\begin{lem}\label{constant}
Given  $(\shf{F}, \xi, \text{det}=1)\leftrightarrow \rho$. If  $(\shf{F}, \xi)$ is constant,  then $\#\rho(\pi_1(Y)) < \infty$.  If $\rho \text{ mod }s$ is  absolutely irreducible, then $(\shf{F}, \xi)$ is constant  $\Leftrightarrow\#\rho(\pi_1(Y)) < \infty$ . 
\end{lem}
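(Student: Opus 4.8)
The plan is to translate the two statements through the equivalence of Proposition \ref{SL-rep}(2) and the trivialization machinery of Lemma \ref{etale-triv}. First I would prove the implication ``$(\shf{F},\xi)$ constant $\Rightarrow \#\rho(\pi_1(Y))<\infty$.'' If $(\shf{F},\xi)$ is constant, then up to isomorphism it is the pullback of $(\shf{F}\bmod s,\xi\bmod s)$ from $Y$ to $Y\times_\kappa S$; since $\kappa$ is a finite field, $\shf{F}\bmod s$ carries an $\mathbb{F}_q$-structure for the Frobenius $\xi_1$, and by the $n=1$ case of Lemma \ref{etale-triv} (i.e. \cite[Proposition 1.2]{Lange-Stuhler}) it is trivialized by a single covering space $Y_1\to Y$. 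Because $(\shf{F},\xi)$ is just the constant extension, the \emph{same} $Y_1$ trivializes every $(\shf{F}_n,\xi_n)$, so the representation $\rho$ factors through $\mathrm{Gal}(Y_1/Y)$, a finite group; hence $\rho(\pi_1(Y))$ is finite. Here I would be careful to check that ``constant'' really does force compatibility of the trivializations across all levels $n$, which is where the projective-system structure of Definition \ref{frovb-0} is used.

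Next, assuming $\rho\bmod s$ is absolutely irreducible, I would prove the converse ``$\#\rho(\pi_1(Y))<\infty \Rightarrow (\shf{F},\xi)$ constant.'' If $H:=\rho(\pi_1(Y))$ is finite, then $\rho$ is (after choosing a Galois covering $Y'\to Y$ with group $H$) a genuine representation of the finite group $H$ over $\kappa\sembrack{s}$ whose reduction mod $s$ is absolutely irreducible; by Lemma \ref{cst rpr}(1) we get $\rho\simeq(\rho\bmod s)\otimes_\kappa\kappa\sembrack{s}$, i.e. $\rho$ is ``constant'' as a representation. Running this isomorphism back through the equivalence $\shf{C}^{sl}_{2,\pi_1(Y)}\rightleftharpoons\shf{C}^{str}_{3,Y_{zar}}$ of Proposition \ref{SL-rep}(2) — which is functorial, so it carries the isomorphism $\rho\simeq(\rho\bmod s)\otimes\kappa\sembrack{s}$ to an isomorphism $(\shf{F},\xi)\simeq$ (pullback of $(\shf{F}\bmod s,\xi\bmod s)$) — shows that $(\shf{F},\xi)$ is constant. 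The remaining direction of the biconditional, ``$(\shf{F},\xi)$ constant $\Rightarrow\#\rho(\pi_1(Y))<\infty$,'' is already covered by the first paragraph and does not need the irreducibility hypothesis.

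The main obstacle I anticipate is the first step: making precise, and using correctly, the claim that a constant Frobenius-periodic bundle is trivialized at every level $n$ by one fixed finite \'etale cover. One has to unwind that ``constant'' means the projective system $((\shf{F}_n,\xi_n))_n$ is isomorphic to the constant system with value $(\shf{F}_1,\xi_1)$ base-changed along $\kappa\sembrack{s}/(s^n)$, and then observe that a basis of $f_1^*\shf{F}_1$ fixed by $f_1^*\xi_1$ on $Y_1$ pulls back to a basis of each $f_1^*\shf{F}_n$ fixed by $f_1^*\xi_n$ — so that, in the language of the proof of Proposition \ref{SL-rep}(2), the matrices $M_{g^{-1}}$ are independent of $n$ and lie in $\mathrm{SL}(r,\kappa)\subset\mathrm{SL}(r,\kappa\sembrack{s})$. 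Everything else is a formal consequence of Lemma \ref{cst rpr} and the functoriality of the equivalences already established; the absolute-irreducibility hypothesis enters only to invoke Lemma \ref{cst rpr}(1) and is genuinely needed there, since without it a finite-image representation over $\kappa\sembrack{s}$ need not be the constant extension of its reduction.
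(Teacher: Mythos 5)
Your proposal is correct and follows essentially the same route as the paper, whose proof is just the citation ``Lemmas \ref{etale-triv}, \ref{cst rpr}(1) and descent theory'': you use Lemma \ref{etale-triv} (via the $n=1$ case of Lange--Stuhler and constancy) for the forward implication, and Lemma \ref{cst rpr}(1) transported through the equivalence of Proposition \ref{SL-rep}(2) for the converse. Your expanded discussion of why a constant system is trivialized at every level by one fixed cover is exactly the content the paper leaves implicit.
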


\begin{proof}
 It follows from  Lemmas \ref{etale-triv}, \ref{cst rpr}(1) and descent theory.
\end{proof}

\begin{prop}\label{useful bijection}  
Given  $(\shf{F}, \xi, \text{det}=1)\Leftrightarrow \rho$ as in Proposition \ref{SL-rep}. TFAE: 
\begin{align*}
 &\shf{F}\text{ mod }s \text{ is geometrically  slope-stable (g.s.s.) }\\
\Leftrightarrow&  (\rho\text{ mod }s)|_{\pi_1(\overline{Y})} \text{ is   absolutely irreducible (a.i.)}.
\end{align*}
If the above conditions hold, then 
$\shf{F} \text{ is  non-constant} \Leftrightarrow  \#\rho(\pi_1(\overline{Y}))=\infty$.
\end{prop}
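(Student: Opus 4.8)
The plan is to leverage the equivalence of categories from Proposition \ref{SL-rep} together with the classical dictionary between representations of $\pi_1(\overline{Y})$ and vector bundles on $\overline{Y}$ trivialized by finite étale covers, i.e.\ Nori's theory of essentially finite / étale-trivializable bundles (as in Lange--Stuhler). First I would handle the ``$\bmod\ s$'' statement, which lives entirely at the level of the special fiber. Set $\shf{E}=\shf{F}\bmod s$, a vector bundle on $\overline{Y}$ equipped with a Frobenius structure $\xi_0$, corresponding under the mod-$s$ reduction of Proposition \ref{SL-rep} to the representation $\rho_0=(\rho\bmod s)|_{\pi_1(\overline{Y})}$ on a finite-dimensional $\overline{\kappa}$-vector space. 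By Lemma \ref{etale-triv} the pair $(\shf{E},\xi_0)$ is étale-trivializable, so it is pulled back from a finite étale Galois cover $\pi:\overline{Y}'\to\overline{Y}$ where $\pi^*\shf{E}\cong\shf{O}^{\oplus r}$ and the descent datum is exactly the $\mathrm{Gal}(\overline{Y}'/\overline{Y})$-action given by $\rho_0$ factoring through that Galois group.

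For the equivalence of the two conditions, the key observation is that every subbundle $\shf{E}'\subset\shf{E}$ that is $\xi_0$-stable has degree $0$: its pullback to $\overline{Y}'$ is a sub-$\shf{O}_{\overline Y'}$-module of $\shf O_{\overline Y'}^{\oplus r}$ that is again étale-trivializable (being a $\mathrm{Frob}$-periodic subbundle of a trivial one with trivial Frobenius structure, hence trivial after a further étale cover), and an étale-trivializable bundle has degree $0$ since it has a finite-order class in the Jacobian after pullback and degree is multiplicative in étale covers. Hence the $\xi_0$-stable subbundles are precisely those of degree $0$, and a $\xi_0$-stable subbundle corresponds exactly to a $\pi_1(\overline{Y})$-subrepresentation of $\rho_0$ (this is the content of the equivalence in Proposition \ref{SL-rep} applied to sub-objects). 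Therefore: $\shf{E}$ is slope-stable (over $\overline{\kappa}$, i.e.\ geometrically) iff it has no $\xi_0$-stable subbundle of rank strictly between $0$ and $r$, iff $\rho_0$ has no nonzero proper subrepresentation over $\overline{\kappa}$, i.e.\ iff $\rho_0$ is absolutely irreducible. (Here one uses that $\deg\shf E=0$, forced by $\det\xi_0$ being trivializable since we are in $\shf C^{str}$, so ``slope-stable'' is literally ``no proper nonzero subbundle of degree $\ge 0$'', matching the Frobenius-stable subbundles; and one must also note a representation-theoretic subtlety, that slope-stability of $\shf E$ is equivalent to irreducibility of $\rho_0$ \emph{as a $\pi_1(\overline Y)$-module}, while absolute irreducibility is irreducibility after $-\otimes_{\overline\kappa}\overline{\overline\kappa}$ — but $\overline\kappa$ is already algebraically closed, so the two coincide.)

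For the second assertion, assume the equivalent conditions hold. By Lemma \ref{constant} applied over $\overline{Y}$ (its proof used Lemmas \ref{etale-triv}, \ref{cst rpr}(1) and descent, all of which are available over $\overline\kappa$), since $\rho_0=(\rho\bmod s)|_{\pi_1(\overline Y)}$ is absolutely irreducible we get: $\overline{Y}\times_\kappa S$-pullback $(\shf F|_{\overline Y}, \xi|_{\overline Y})$ is constant $\Longleftrightarrow \#\rho(\pi_1(\overline Y))<\infty$. So it remains only to check that $\shf{F}$ (as a bundle on $Y\times_\kappa S$, equivalently on $Y\times_\kappa\shf S$) is constant if and only if its base change to $\overline{Y}$ is constant. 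The nontrivial direction is ``$\overline{Y}$-constant $\Rightarrow$ $Y$-constant'': constancy of $(\shf F,\xi)$ is the statement that the isomorphism class $[\shf F]$, viewed as an $S$-point of the (completed) moduli functor, equals the constant family $[\shf E]\times S$; this is a closed condition that descends from $\overline\kappa$ to $\kappa$ by faithfully flat / Galois descent since $\overline\kappa/\kappa$ is an increasing union of finite Galois extensions — more concretely, the locus of $s$ where $\shf F$ fails to be isomorphic to $p^*\shf E$ is a closed subscheme of $S$ defined over $\kappa$, so it is empty iff it is empty over $\overline\kappa$. Taking the contrapositive gives the claim.

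The main obstacle, and the point I would write most carefully, is the lemma that $\xi_0$-stable subbundles of $(\shf E,\xi_0)$ have degree zero and correspond bijectively to subrepresentations of $\rho_0$ — i.e.\ promoting the equivalence of Proposition \ref{SL-rep} from an equivalence of categories to a statement controlling sub-objects and their degrees, and tying ``no proper Frobenius-stable subsheaf'' to honest slope-stability. Everything else (the two descent arguments over $\overline\kappa/\kappa$, and the reduction of the ``non-constant'' claim to Lemma \ref{constant}) is routine.
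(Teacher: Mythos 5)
Your argument for the first equivalence has a genuine gap at its load-bearing step. You prove only one direction of your ``key observation'': that a $\xi_0$-stable subbundle of $\shf{E}=\shf{F}\bmod s$ has degree $0$ (which in any case follows in one line from $\deg(F^d)^*\shf{E}'=q\deg\shf{E}'$). The sentence ``Hence the $\xi_0$-stable subbundles are precisely those of degree $0$'' is a non sequitur: the converse direction --- that an arbitrary slope-$0$ destabilizing subbundle of $\shf{E}\otimes\overline{\kappa}$ is Frobenius-invariant, equivalently comes from a subrepresentation --- is exactly what carries the implication (a.i.)$\implies$(g.s.s.), and it is essentially the statement being proved, so it cannot be asserted for free. (Whether it is even literally true depends on which Frobenius structure you use after base change to $\overline{\kappa}$; with the absolute Frobenius on $Y$ it fails already for constant line subbundles of $(\shf{O}^{\oplus 2},\mathrm{triv})$ in non-$\kappa$-rational directions.) The paper closes precisely this point by an argument you never make: pull the destabilizing slope-$0$ subbundle back to the \'etale trivializing cover, use that a subbundle of slope $0$ of a trivial bundle is itself trivial (dualize: it becomes a globally generated degree-$0$ quotient of $\shf{O}^{\oplus r}$), so it is spanned by a subspace $W\subset\overline{\kappa}^{\,r}$ of global sections, and $W$ is stable under the deck/Galois action --- which is $\rho_0$ --- because the subbundle is pulled back from $\overline{Y}$; this yields the subrepresentation and the reducibility of $\rho_0|_{\pi_1(\overline{Y})}\otimes\overline{\kappa}$. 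With that lemma inserted your first paragraph's setup does give both directions; without it the (a.i.)$\implies$(g.s.s.) half is unproved.

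For the second assertion your route also diverges from the paper's and leaves a hole. You invoke Lemma \ref{constant} ``over $\overline{Y}$'', which requires redoing the Frobenius-periodic/\'etale-sheaf dictionary of Section 2 over $\overline{\kappa}$ (it is set up for a finite base field), and, more seriously, you need the descent ``$(\shf{F},\xi)$ constant over $\overline{Y}$ $\Rightarrow$ constant over $Y$''. The justification you give --- ``the locus of $s$ where $\shf{F}$ fails to be isomorphic to $p^*\shf{E}$ is a closed subscheme of $S$ defined over $\kappa$'' --- is not meaningful as stated: $S=\Spec\kappa\sembrack{s}$ has a single closed point and constancy of the family is not a fibrewise condition in $s$. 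A correct descent is possible (use that $\shf{F}\bmod s$ is stable, hence simple, so an isomorphism $\shf{F}\simeq p^*(\shf{F}\bmod s)$ over $\overline{\kappa}\sembrack{s}$ can be normalized Galois-equivariantly via Hilbert 90), but the paper avoids it entirely: it notes that absolute irreducibility of $(\rho\bmod s)|_{\pi_1(\overline{Y})}$ passes to $\rho|_{\pi_1(\overline{Y})}$ and to $\rho\bmod s$, applies Lemma \ref{constant} over $Y$ to get ``constant $\Leftrightarrow\#\rho(\pi_1(Y))<\infty$'', and then uses Lemma \ref{cst rpr}(2) for $1\to\pi_1(\overline{Y})\to\pi_1(Y)\to\hat{\db{Z}}\to 0$ to pass between arithmetic and geometric monodromy. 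You should either carry out your descent argument carefully or switch to that route.
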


\begin{proof}
Let $\shf{F}_0=\shf{F}\text{ mod }s$ and $\rho_0=\rho\text{ mod }s$.
(g.s.s.)$\implies$ (a.i.):  The reducibility of $\rho_0|_{\pi_1(\overline{Y})}\otimes\overline{\kappa}$ implies the existence of a proper subbundle of $\shf{F}_0\otimes \overline{\kappa}$ with slope $0$.\\
(a.i.)$\implies$ (g.s.s.): As $\shf{F}_0$ is \'etale trivialized, it is geometrically slope-semistable. Since a subbundle with slope $0$ of a trivial bundle is trivial, then the existence of a proper subbundle of $\shf{F}_0\otimes \overline{\kappa}$ with slope $0$ implies the reducibility of $\rho_0|_{\pi_1(\overline{Y})}\otimes\overline{\kappa}$.\\
Since the absolute irreducibility of $(\rho\text{ mod }s)|_{\pi_1(\overline{Y})}$ implies the same property for $\rho|_{\pi_1(\overline{Y})} $ and $\rho \text{ mod }s$, then
the second equivalence follows  from  Lemmas \ref{constant} and \ref{cst rpr}(2).
\end{proof}

\begin{proof}[Proof of Theorem \ref{poslocus}]
(1)$\implies$(2): By Proposition \ref{SL-rep}, there exists a strictly Frobenius-periodic rank-$r$ vector bundle $(\shf{F}, \xi, \text{det}=1)$ over $(Y\otimes_\kappa \widetilde{\kappa})\times_{\widetilde{\kappa}} \Spec\widetilde{\kappa}\sembrack{s}$. Locally, $(\shf{F}, \xi, \text{det}=1)$ is defined by transition matrices and linear maps.  Let $A\subset \widetilde{\kappa}\sembrack{s}$ be the finitely generated $\widetilde{\kappa}$-algebra generated by  elements appearing in  the matrices that define $(\shf{F}, \xi, \text{det}=1)$. Clearly, there exists canonically a strictly Frobenius-periodic bundle $(\shf{F}', \xi', \text{det}=1)$  over $(Y\otimes_\kappa \widetilde{\kappa})\times_{\widetilde{\kappa}}\Spec A$ such that its  pullback to $Y\times_\kappa\Spec\widetilde{\kappa}\sembrack{s}$ is 
exactly $(\shf{F}, \xi, \text{det}=1)$. As  $\shf{F}'$ can be viewed as a family of bundles over $\overline{Y}$ fixed by  the geometric Frobenius map of $Y\otimes_\kappa \widetilde{\kappa}$ over $ \widetilde{\kappa}$, i.e.
the $N^{th}$-power of the geometric Frobenius map of $Y$ over $\kappa$, where $N=[\widetilde{\kappa}: \kappa]$. Thus the image of the modular morphism  $\Spec A\ra \shf{M}_{Y}(r, 0)$ is  in $\text{Fix}(V^N)$. By Proposition \ref{useful bijection}, $\shf{F}'$ is a non-constant family and consists mostly of stable bundles, thus $\text{Fix}(V^N)$ has the required properties.\\
(2)$\implies$(1): It follows from the construction of $\shf{M}_{X_t}(2,0)$ as a GIT quotient as shown in \cite[Corollary 3.2 \& Lemma 3.3]{Laszlo}.
\end{proof}

From now on, in order to obtain representations with an infinite geometric monodromy, we turn to study the fixed point locus of the Verschiebung.

\section{Proof of Theorem \ref{fixline}}\label{moduli} In this section, $G=\db{Z}/2\db{Z} \times S_3$.
Let $X$ be a projective smooth ordinary curve of genus $2$ over a field $\kappa$ of characteristic $2$ with  $\text{Aut}(X/\kappa)= G$. Except in the proof of Theorem \ref{fixline}, $\kappa$ can be infinite.  Let $X(1)$ be the scheme deduced from $X$ by the extension of scalars $a \mapsto a^{2}$ and $F_{X/\kappa}: X\ra X(1)$ be the relative Frobenius map. Note that the $G$-action on $X$ induces a $G$-action on $X(1)$ that is compatible with  $F_{X/\kappa}$. 

\subsection{$G$-action}\label{grpact}
 In this subsection, we study the fixed point locus of the $G$-action on the Kummer surface $\text{Km}_X$ of X and on the coarse moduli space $\shf{M}_X(2,0)$ of rank-$2$ semistable bundles with trivialized determinant over $\overline{X}=X\otimes_\kappa\overline{\kappa}$. 

Let $\pi_X: X\ra|\omega_{X/\kappa}|=\db{P}^1$ be the canonical morphism of $X$. As $X$ is ordinary, the double covering $\pi_X$ has three branch points. May assume that the points are $\{0, 1, \infty \}$. The $\db{Z}/2\db{Z}$-action on $X$ is generated by the hyperelliptic involution of $\pi_X$, denoted by  $\iota$; the $S_3$-action on $X$ induces an action on the canonical linear system $|\omega_{X/\kappa}|$ and hence can be identified as the permutation group of the branch points $\{0, 1, \infty \}$. Let $\tau_{01}=(01)(\infty)$ and  $\sigma=(01\infty)$.  Note that $\sigma$ fixes four points on $\overline{X}$.

The $G$-action on $X$ induces a $G$-action on the Jacobian $J_{X}$ and thus  on the Kummer surface $\text{Km}_{X}$ of $X$. We can actually figure out the fixed points of $G$ on $\text{Km}_{X}$.

\begin{lem}\label{fixptKm}
The set of the fixed points $(\text{Km}_{X})^G$ of the $G$-action on $\text{Km}_{X}$ consists of  three points: $ \shf{O}_X^{\oplus 2}$,   $ E_{1,X}=\shf{O}_X(Q-\tau_{01}(Q))\oplus \shf{O}_X(\tau_{01}(Q)-Q)$, $ E_{2, X}=\shf{O}_X(Q-\iota\circ\tau_{01}(Q))\oplus\shf{O}_X(\iota\circ\tau_{01}(Q)-Q)$, where $Q\in\overline{X}$ is a fixed point of $\sigma$.
\end{lem}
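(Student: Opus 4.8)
The plan is to compute $(\mathrm{Km}_X)^G$ by working on the Jacobian $J_X$ and using the identification $\mathrm{Km}_X = J_X/\langle -1\rangle$. A point of $\mathrm{Km}_X$ is fixed by $G$ exactly when its preimage in $J_X$ is a $G$-stable pair $\{L, L^{-1}\}$ (where $L\in J_X$), i.e.\ each $g\in G$ satisfies $g^*L \cong L$ or $g^*L \cong L^{-1}$. First I would set up coordinates: using the canonical morphism $\pi_X\colon X\to\mathbb P^1$ with branch locus $\{0,1,\infty\}$, write the Weierstrass points as $P_0,P_1,P_\infty\in\overline X$ lying over $0,1,\infty$, and recall that $J_X[2]$ is generated by the classes $\mathcal O_X(P_i - P_j)$. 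The hyperelliptic involution $\iota$ acts as $-1$ on $J_X$, so it already acts trivially on $\mathrm{Km}_X$; hence the condition reduces to stability under the $S_3 = \langle \sigma, \tau_{01}\rangle$ part, where $\sigma=(01\infty)$ permutes the $P_i$ cyclically and $\tau_{01}=(01)$.

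Next I would analyze the two conditions separately. For $g^*L\cong L$ for all $g\in S_3$: taking $g=\sigma$ of order $3$ and $g=\tau_{01}$ of order $2$, an $S_3$-invariant line bundle of degree $0$ must have class killed by the $S_3$-action on $\mathrm{Pic}^0$; since $\sigma$ has order $3$ and $\tau_{01}$ order $2$ while $J_X[\,\cdot\,]$ here forces the class to be $2$-torsion and also $\sigma$-invariant, and the only $\sigma$-invariant element of $J_X[2]$ (with $\sigma$ permuting $P_0-P_1, P_1-P_\infty, P_\infty-P_0$ cyclically) is $0$, one gets $L = \mathcal O_X$, giving the point $\mathcal O_X^{\oplus 2}$. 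For the mixed case $g^*L\cong L^{-1}$ for some generators: here I would use the four fixed points $Q$ of $\sigma$ on $\overline X$ (guaranteed since $\sigma$ has order $3$ coprime to the characteristic and acts on a genus-$2$ curve — Hurwitz/Lefschetz fixed-point count gives exactly $4$). Setting $L = \mathcal O_X(Q-\tau_{01}(Q))$, one checks $\sigma^*L \cong L$ (because $\sigma$ fixes $Q$ and conjugation relations in $S_3$ relate $\sigma\tau_{01}\sigma^{-1}$ to another transposition, so $\sigma^*(Q-\tau_{01}Q)$ is linearly equivalent to $Q-\tau_{01}Q$ after using that the various $Q - \tau(Q)$ differ by elements of the $\sigma$-trivial part) while $\tau_{01}^*L \cong L^{-1}$ obviously. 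This produces $E_{1,X}$; composing with $\iota$ (which on such a non-$2$-torsion class genuinely changes the pair) produces the distinct point $E_{2,X}$.

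Then I would verify these three points are distinct and exhaust $(\mathrm{Km}_X)^G$. Distinctness: $\mathcal O_X^{\oplus 2}$ corresponds to the origin; $E_{1,X}, E_{2,X}$ correspond to the pairs $\{\pm(Q-\tau_{01}Q)\}$ and $\{\pm(Q - \iota\tau_{01}(Q))\}$, which are non-torsion (or at least non-trivial) and differ since $\iota\tau_{01}(Q)\ne \tau_{01}(Q)$ and $Q$ is not a Weierstrass point. Exhaustiveness: any $G$-fixed point of $\mathrm{Km}_X$ lifts to a pair $\{L,L^{-1}\}$ with $\tau_{01}^*L \in\{L,L^{-1}\}$ and $\sigma^*L\in\{L,L^{-1}\}$; since $\sigma$ has odd order, $\sigma^*L\cong L^{-1}$ forces $\sigma^*L^{-1}\cong L$ hence $L^{\otimes 2}$ is $\sigma$-invariant of degree $0$, and after chasing through the order-$3$ action one finds $\sigma^*L\cong L$ is forced anyway; so $L$ is $\sigma$-invariant, hence (pushing forward along the degree-$3$ quotient $\overline X\to \overline X/\sigma$, or directly) $L\cong \mathcal O_X(D)$ with $D$ supported on the $\sigma$-fixed locus plus a $\sigma$-invariant divisor, which narrows $L$ to lie in the subgroup generated by $\mathcal O_X(Q-Q')$ for fixed points $Q,Q'$ of $\sigma$; imposing the $\tau_{01}$-condition then cuts this down to exactly the three listed classes.

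The main obstacle I expect is the exhaustiveness step: cleanly parametrizing all $\sigma$-invariant degree-$0$ line bundles on $\overline X$ and showing the $\tau_{01}$-condition leaves precisely three $\langle -1\rangle$-orbits. The cleanest route is probably to identify $J_X$ with the Prym-type or quotient data of the genus-$2$ curve under the $\mathbb Z/3$-action: the quotient $\overline X/\sigma$ has genus $0$ (again by Hurwitz, with $4$ ramification points of index $3$: $2\cdot 2 - 2 = 3(2g'-2) + 4\cdot 2$ gives $g'=0$), so $\sigma$-invariant line bundles of degree $0$ on $\overline X$ correspond to a subgroup isomorphic to $\mu_3$ inside $J_X$ (the "new part" being trivial), generated by differences of the four $\sigma$-fixed points; intersecting the three nontrivial such classes with the $\tau_{01}$-anti-invariance condition and adding the origin yields exactly the three asserted fixed points. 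I would present the Hurwitz computations and the identification $J_X^\sigma \cong \mu_3$ carefully, as everything else is then bookkeeping with the $S_3$-relations.
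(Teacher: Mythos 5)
Your overall reduction is the same as the paper's (a $G$-fixed point of $\mathrm{Km}_X$ is a pair $\{\shf{L},\shf{L}^{-1}\}$ with $g^*\shf{L}\simeq\shf{L}$ or $\shf{L}^{-1}$ for the $S_3$-generators, since $\iota$ acts as $-1$), and your observation that $\sigma^*\shf{L}\simeq\shf{L}^{-1}$ forces $\shf{L}\in J_X[2]$ and hence $\sigma^*\shf{L}\simeq\shf{L}$ in all cases is correct. The problem is in the exhaustiveness step, which you yourself flag as the crux. Two of your structural claims there are false. First, in your ``$g^*\shf{L}\simeq\shf{L}$ for all $g$'' case you assert that $S_3$-invariance forces $2$-torsion and hence $\shf{L}=\shf{O}_X$; in fact $\shf{O}_X(Q-\iota\circ\tau_{01}(Q))$ is a nontrivial $3$-torsion class invariant under \emph{all} of $S_3$ (using $Q+\iota Q\sim \tau_{01}Q+\iota\tau_{01}Q\sim K_X$ one computes $\tau_{01}^*(Q-\iota\tau_{01}Q)\sim Q-\iota\tau_{01}Q$), so this case analysis is wrong, even though the bundle reappears in your ``mixed'' case. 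Second, and more seriously, the identification $J_X^\sigma\simeq\mu_3$ is wrong: since $\overline{X}/\langle\sigma\rangle\simeq\db{P}^1$, one has $1+\sigma+\sigma^2=0$ on $J_X$, so $J_X^\sigma=\ker(1-\sigma)$ is the kernel of $1-\zeta_3$ for the induced $\db{Z}[\zeta_3]$-structure, which has order $9$, i.e. $J_X^\sigma\simeq(\db{Z}/3)^2$, generated by differences of the four $\sigma$-fixed points $Q,\tau_{01}Q,\iota Q,\iota\tau_{01}Q$ (equivalently, the $(\db{Z}/3)^{4-2}$ attached to the totally ramified triple cover). With a group of order $3$ your final count cannot work: the origin plus at most one $\pm$-pair gives two Kummer points, which would force $E_{1,X}=E_{2,X}$ and contradict the statement you are proving; your phrase ``the three nontrivial such classes'' is already inconsistent with $\mu_3$.

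The repaired bookkeeping is: the eight nontrivial elements of $J_X^\sigma$ fall into four $\pm$-pairs, $\pm(Q-\tau_{01}Q)$, $\pm(Q-\iota\tau_{01}Q)$, $\pm(Q-\iota Q)$, $\pm(\tau_{01}Q-\iota Q)$; using $Q+\iota Q\sim\tau_{01}Q+\iota\tau_{01}Q\sim K_X$ one checks that $\tau_{01}$ preserves exactly the first two pairs (it sends the last two into each other unless some $\sigma$-fixed point were a Weierstrass point, which is excluded because $\sigma$ permutes the three branch points of $\pi_X$ without fixing any), and together with the origin this gives precisely the three asserted points. You also need, and do not prove, that every $\sigma$-invariant class lies in the subgroup generated by these differences (an order count $\#J_X^\sigma\le 9$ from $1+\sigma+\sigma^2=0$ does it), and your remark that the classes of $E_{1,X},E_{2,X}$ are ``non-torsion'' is wrong (they are $3$-torsion; only nontriviality is needed). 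For comparison, the paper avoids all of this by writing any degree-$0$ class as $\shf{O}_X(Q_1-Q_2)$ via $h^0(\shf{L}\otimes\omega_{X/\kappa})\ge 1$ and running a case-by-case check over the type of the points $Q_1,Q_2$ (ramification points of $\pi_X$, $\sigma$-fixed points, or neither); your route through $J_X^\sigma$ is viable and arguably cleaner, but only once the fixed subgroup is identified correctly.
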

\begin{proof}
It suffices to find all line bundles $\shf{L}\in \text{Pic}^0(\overline{X})$ such that $g^*\shf{L}\simeq \shf{L} \text{ or } \shf{L}^{-1}$ for $g=\tau_{01}, \tau_{0\infty} \text{ and } \sigma$. As $h^0(\shf{L}\otimes\omega_{X/\kappa})\geq1$, then $\shf{L}\simeq \shf{O}_X(Q_1-Q_2)$ for $Q_1, Q_2\in \overline{X}$. The lemma is proved by a  case-by-case analysis according to the three types of points:  (I) the three ramification points; (II) the four fixed points of $\sigma$; (III) all the others.
\end{proof}

Clearly $E_{1, X}$ and $E_{2,X}$ are indenpendent of the choice of the fixed point of $\sigma$. Take a fixed point $Q_1$ of $\sigma$ on $\overline{X}(1)$, we similarly define $E_{1, X(1)}$ and $E_{2,X(1)}$. We have

 \begin{lem} \label{fixfro}
For $j=1, 2$,  $F_{X/\kappa}^*E_{j, X(1)}=E_{j, X}$. 
\end{lem}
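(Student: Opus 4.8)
The plan is to compute the effect of the relative Frobenius $F_{X/\kappa}\colon X\to X(1)$ on the explicit degree-zero line bundles that, by Lemma \ref{fixfro}'s predecessor, describe $E_{1,X(1)}$ and $E_{2,X(1)}$, and to check directly that the pullback recovers the analogous bundles on $X$. Concretely, $E_{j,X(1)}$ is of the form $\shf{O}_{X(1)}(R - gR)\oplus \shf{O}_{X(1)}(gR - R)$ where $R = Q_1$ is a fixed point of $\sigma$ on $\overline{X}(1)$ and $g$ is $\tau_{01}$ (for $j=1$) or $\iota\circ\tau_{01}$ (for $j=2$); pulling back a rank-$2$ bundle commutes with taking direct sums, so it suffices to understand $F_{X/\kappa}^*\shf{O}_{X(1)}(R - gR)$ as an element of $\mathrm{Pic}^0(\overline{X})$.

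The key input is the standard description of $F_{X/\kappa}$ on divisors: for a closed point $R\in X(1)(\overline{\kappa})$, the scheme-theoretic fiber $F_{X/\kappa}^{-1}(R)$ is $p\cdot R' = 2R'$ where $R'\in X(\overline{\kappa})$ is the unique point mapping to $R$ (here $p=2$), and this point $R'$ is exactly the point whose image under the extension-of-scalars identification is $R$; in other words, $F_{X/\kappa}^*\shf{O}_{X(1)}(R) \cong \shf{O}_X(2R')$. Therefore $F_{X/\kappa}^*\shf{O}_{X(1)}(R - gR) \cong \shf{O}_X(2R' - 2(gR)')$. First I would observe that the $G$-action is compatible with $F_{X/\kappa}$ (as noted in Section \ref{moduli}), so $(gR)' = g R'$, giving $F_{X/\kappa}^*\shf{O}_{X(1)}(R-gR)\cong \shf{O}_X(2(R' - gR'))\cong \shf{O}_X(R'-gR')^{\otimes 2}$. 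Next I would note that $R'$ is a fixed point of $\sigma$ on $\overline{X}$ (again by Frobenius-equivariance, the preimage of a $\sigma$-fixed point is $\sigma$-fixed), so $\shf{O}_X(R'-gR')$ is precisely one of the line bundle summands of $E_{j,X}$ as defined via a $\sigma$-fixed point.

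It then remains to identify $\shf{O}_X(R'-gR')^{\otimes 2}$ with $\shf{O}_X(R'-gR')$ itself, i.e.\ to show that this line bundle is $2$-torsion in $\mathrm{Pic}^0(\overline{X})$ — equivalently that $\shf{O}_X(2R'-2gR')\cong\shf{O}_X$, i.e.\ that $2R'\sim 2gR'$ as divisors. This is where I would use the hyperelliptic geometry: $2R'\sim 2gR'$ holds iff $R'$ and $gR'$ have the same image under the canonical map $\pi_X$ followed by... more precisely, on a genus-$2$ curve $2P\sim 2P'$ iff $P' = P$ or $P' = \iota P$, i.e.\ iff $P,P'$ lie in the same fiber of the hyperelliptic map. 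So the real content is: for $R'$ a $\sigma$-fixed point and $g\in\{\tau_{01},\iota\tau_{01}\}$, the point $gR'$ lies in the hyperelliptic fiber of $R'$. For $g=\iota$ this is immediate; the case $g = \tau_{01}$ needs that $\tau_{01}$ sends the $\sigma$-fixed point $R'$ into the fiber $\{R', \iota R'\}$, which should follow from the case-by-case analysis of $\sigma$-fixed points already invoked in the proof of Lemma \ref{fixptKm} (the four $\sigma$-fixed points form two hyperelliptic fibers, permuted appropriately by $\tau_{01}$).

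I expect the main obstacle to be exactly this last identification — pinning down how $\tau_{01}$ and $\iota\circ\tau_{01}$ act on the four $\sigma$-fixed points of $\overline{X}$ and checking that, modulo the hyperelliptic involution, they preserve each such point, so that $\shf{O}_X(R'-gR')$ is genuinely $2$-torsion and hence equals its own square. Once that is in hand, the chain $F_{X/\kappa}^*E_{j,X(1)} = \shf{O}_X(R'-gR')^{\otimes2}\oplus\shf{O}_X(gR'-R')^{\otimes2} = \shf{O}_X(R'-gR')\oplus\shf{O}_X(gR'-R') = E_{j,X}$ closes the argument; everything else is formal compatibility of pullback with direct sums and with the $G$-action. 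An alternative, possibly cleaner route that avoids the explicit $\sigma$-fixed-point bookkeeping: since $(\mathrm{Km}_X)^G$ is the finite set $\{\shf{O}_X^{\oplus2}, E_{1,X}, E_{2,X}\}$ by Lemma \ref{fixptKm}, and $F_{X/\kappa}^*$ maps $(\mathrm{Km}_{X(1)})^G$ into $(\mathrm{Km}_X)^G$ by $G$-equivariance, one only needs to rule out $F_{X/\kappa}^*E_{j,X(1)} = \shf{O}_X^{\oplus 2}$ and the swap $E_1\leftrightarrow E_2$; non-triviality follows since $F_{X/\kappa}^*$ is injective on $\mathrm{Pic}^0$ up to the known kernel, and distinguishing $E_1$ from $E_2$ can be done by tracking which hyperelliptic fiber the supporting points lie in. I would present the divisor-class computation as the main line and mention this equivariance shortcut as a cross-check.
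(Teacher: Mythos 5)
Your first half is exactly the paper's computation: pull back the divisors to get $F_{X/\kappa}^*E_{j,X(1)}=\shf{O}_X(2Q-2gQ)\oplus\shf{O}_X(2gQ-2Q)$ with $Q=F_{X/\kappa}^{-1}(Q_1)$ a $\sigma$-fixed point, $g\in\{\tau_{01},\iota\circ\tau_{01}\}$. But the torsion step that follows is where the argument breaks. Writing $L=\shf{O}_X(Q-gQ)$, what you must show is $\{L^{\otimes 2},L^{\otimes -2}\}=\{L,L^{-1}\}$. You try to do this by proving $L^{\otimes 2}\cong L$, which you then restate as ``$L$ is $2$-torsion''; these are different conditions ($L^{\otimes 2}\cong L$ forces $L\cong\shf{O}_X$, while $2$-torsion means $L^{\otimes 2}\cong\shf{O}_X$), and neither one can be what you want: the first would make $E_{j,X}$ the trivial bundle, contradicting Lemma \ref{fixptKm}, and the second would give $F_{X/\kappa}^*E_{j,X(1)}\cong\shf{O}_X^{\oplus 2}\neq E_{j,X}$. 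Moreover the $2$-torsion claim is false on geometric grounds: your asserted genus-$2$ fact ``$2P\sim 2P'$ iff $P'\in\{P,\iota P\}$'' is wrong ($2P\sim 2P'$ with $P\neq P'$ forces $h^0(2P)\geq 2$, i.e.\ both points are Weierstrass points; in particular $2P\sim 2\iota P$ fails for non-Weierstrass $P$, so even your ``$g=\iota$ is immediate'' step is not). Since $\sigma=(01\infty)$ permutes the three branch points cyclically, no $\sigma$-fixed point is a ramification point of $\pi_X$, so $2Q\sim 2\tau_{01}(Q)$ would force $\tau_{01}(Q)=Q$ and again collapse $E_{1,X}$ to $\shf{O}_X^{\oplus 2}$.

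The correct key fact, which is the paper's one-line punchline, is that $L$ is $3$-torsion, not $2$-torsion: the four points $Q,\tau_{01}(Q),\iota(Q),\iota\circ\tau_{01}(Q)$ are the index-$3$ ramification points of the degree-$3$ quotient $X\ra X/\langle\sigma\rangle\simeq\db{P}^1$, so $3Q\sim 3\tau_{01}(Q)\sim 3\iota\circ\tau_{01}(Q)$ (all are pullbacks of points of $\db{P}^1$). Hence $L^{\otimes 2}\cong L^{-1}$, and the Frobenius pullback returns $E_{j,X}$ with its two summands interchanged. Your proposed ``equivariance shortcut'' also does not close the gap as stated: since $X$ is ordinary, $F_{X/\kappa}^*$ on $\text{Pic}^0$ is the Verschiebung of the Jacobian and has a nontrivial ($4$-element, $2$-torsion) kernel, so ruling out $F_{X/\kappa}^*E_{j,X(1)}=[\shf{O}_X^{\oplus 2}]$ needs precisely the information that the summands of $E_{j,X(1)}$ are nontrivial of order $3$ — i.e.\ the same ramification input — and distinguishing $E_1$ from $E_2$ is likewise left vague.
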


\begin{proof}
 Let $Q=F_{X/\kappa}^{-1}(Q_1)$. Then $Q$ is  a fixed point of $\sigma$ because $Q_1\in \overline{X}(1)$ is  a fixed point of $\sigma$ and $F_{X/\kappa}$ preserves the $G$-action. Since $F_{X/\kappa}^*[\shf{O}_{X(1)}(Q_1-\tau_{01}(Q_1))]=\shf{O}_X(2Q-2\tau_{01}(Q))$ and $F_{X/\kappa}^*[\shf{O}_{X(1)}(Q_1-\iota\circ\tau_{01}(Q_1))]=\shf{O}_X(2Q-2\iota\circ\tau_{01}(Q))$,  it suffices to prove that $3Q\sim 3\tau_{01}(Q)\sim 3\iota\circ\tau_{01}(Q)$. This follows from that $Q, \tau_{01}(Q), \iota(Q), \iota\circ\tau_{01}(Q) $ are the ramification points with index $3$ of  the  quotient $X \ra X/\langle \sigma\rangle\simeq\db{P}^1$ . 
\end{proof}

By (\cite{Laszlo-Pauly}), $\shf{M}_{X}(2, 0)$ is isomorphic to $|2\Theta|\simeq\db{P}^3_\kappa$ and the Kummer surface $\text{Km}_X$ is  a quartic hypersurface.  To find the fixed point locus $(\shf{M}_{X}(2, 0))^G$, we need the following. 

\begin{lem}\label{ptrick}
 Let $H$ be a subgroup of $\text{Aut}(\db{P}^n_{k}/k)$, where $k$ is a field of characteristic $p$. Assume that  $H$ is generated by elements with order of the form $p^r$.  Let $P_1, P_2\in \db{P}^n_{k}$ be fixed by $H$, then the projective line $\overline{P_1P_2}$ is fixed by $H$. 
\end{lem}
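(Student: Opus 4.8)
The plan is to linearize the problem: a projective automorphism of $\db{P}^n_k$ lifts (non-canonically) to an element of $\text{GL}(n+1,k)$, and a point fixed by $H$ corresponds to a line in $k^{n+1}$ stable under each such lift. So first I would lift a generator $h\in H$ of order $p^r$ to a matrix $A\in\text{GL}(n+1,k)$; since $h^{p^r}=\text{id}$ in $\text{PGL}$, the matrix $A^{p^r}$ is a scalar $\lambda\cdot I$, and after rescaling $A$ by a $p^r$-th root of $\lambda$ (which exists in $k$ or is forced to be in $k$ because $x\mapsto x^{p^r}$ is a bijection on the field generated, but more simply: in characteristic $p$ we may replace $A$ by $\lambda^{-1/p^r}A$ using that Frobenius is injective, hence $\lambda$ has a unique $p^r$-th root in any perfect overfield; to stay over $k$ one notes $\lambda^{1/p^r}$ generates a purely inseparable extension and the resulting statement about the line $\overline{P_1P_2}$ descends) we may assume $A^{p^r}=I$. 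Then $(A-I)^{p^r}=A^{p^r}-I=0$ in characteristic $p$, so $A-I$ is nilpotent and $A$ is unipotent.

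The key step: let $v_1,v_2\in k^{n+1}$ be vectors spanning the lines corresponding to $P_1,P_2$. Being fixed points of $h$ means $Av_i=\mu_iv_i$ for scalars $\mu_i$; but $A$ unipotent has $1$ as its only eigenvalue, so $\mu_i=1$, i.e. $Av_1=v_1$ and $Av_2=v_2$. Therefore $A$ fixes the entire plane $\text{span}(v_1,v_2)$ pointwise (assuming $P_1\neq P_2$; if $P_1=P_2$ there is nothing to prove), hence fixes every point of the projective line $\overline{P_1P_2}$. Running this argument for each generator $h$ of $H$ and using that the generators with orders a power of $p$ generate all of $H$, we conclude that every element of $H$ fixes $\overline{P_1P_2}$ pointwise; in particular $\overline{P_1P_2}$ is an $H$-stable subvariety, which is the claim.

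I expect the only genuine subtlety to be the descent/rationality bookkeeping in the first paragraph: the scalar rescaling needed to pass from $A$ with $A^{p^r}$ scalar to $A$ unipotent may require extracting a $p^r$-th root that a priori lies only in a purely inseparable extension $k'$ of $k$. This is harmless because the conclusion — that the line through $P_1$ and $P_2$ is $H$-invariant — is a closed condition that can be checked after the faithfully flat base change $k\to k'$, and the line $\overline{P_1P_2}$ is already defined over $k$ once $P_1,P_2$ are; alternatively, one observes that the span $\text{span}(v_1,v_2)$ is $A$-stable for the unnormalized $A$ as well, since scaling $A$ by a scalar does not change which subspaces are invariant. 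So in fact no root extraction is needed at all for the invariance statement: from $A^{p^r}=\lambda I$ one gets $(A-\lambda^{1/p^r}I)^{p^r}=0$ over $\overline{k}$, the eigenvalues of $A$ all equal $\lambda^{1/p^r}$, so the eigenvectors $v_1,v_2$ share the eigenvalue and $\text{span}(v_1,v_2)$ is $A$-invariant; projectivizing gives the result over $k$ directly. This is the cleanest route and avoids any field-theoretic friction, so I would write the proof in that form.
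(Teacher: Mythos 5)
Your proposal is correct and, in its final ``cleanest route'' form, is essentially the paper's own argument: lift $h$ to $\widetilde{h}\in\mathrm{GL}(n+1,k)$ with $\widetilde{h}^{p^r}=\mu I$, note $\mu_1^{p^r}=\mu_2^{p^r}$ for the eigenvalues on $v_1,v_2$, and conclude $\mu_1=\mu_2$ by injectivity of the $p^r$-power map in characteristic $p$, so $\widetilde{h}$ acts as a scalar on $\mathrm{span}(v_1,v_2)$ and the line is fixed. The initial detour through rescaling to a unipotent matrix and the attendant descent worries are unnecessary, as you yourself observe.
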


\begin{proof}
Identify points $P_1,P_2$  with  vectors $v_1, v_2\in k^{n+1}$. Let $h\in H$ have order $p^r$ and $\widetilde{h}\in\text{GL}(n+1, k)$ be a preimage of $h$. Then $\widetilde{h}^{p^r}=\mu I_{n+1}$. By assumption, $\widetilde{h}(v_1)=\mu_1v_1$ and $\widetilde{h}(v_2)=\mu_2v_2$. Thus $\mu_1^{p^r}=\mu_2^{p^r}\implies \mu_1=\mu_2$. Therefore,  $h$ fixes the line $\overline{P_1P_2}$.
\end{proof}

\begin{prop}\label{proline}
The fixed point locus of the $G$-action on $\shf{M}_{X}(2, 0)$ is a projective line, denoted by  $\bigtriangleup_{X}$.
\end{prop}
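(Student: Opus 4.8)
The plan is to exhibit two explicit $G$-fixed points in $\shf{M}_X(2,0)\simeq\db{P}^3_\kappa$, check that $G$ acts on this $\db{P}^3$ through automorphisms generated by elements of order a power of $2$, and then invoke Lemma \ref{ptrick} to conclude that the line through those two points is pointwise fixed; finally one argues that the $G$-fixed locus is no larger than this line. First I would produce the fixed points. The zero bundle $\shf{O}_X^{\oplus 2}$ and the bundles $E_{1,X}, E_{2,X}$ of Lemma \ref{fixptKm} are $G$-fixed points of $\text{Km}_X\subset \shf{M}_X(2,0)$, so I already have three candidate fixed points; I will take $P_1=[\shf{O}_X^{\oplus2}]$ and $P_2=[E_{1,X}]$ (say), both fixed by all of $G$.

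Next I must verify the hypothesis of Lemma \ref{ptrick}, namely that the image of $G$ in $\text{Aut}(\db{P}^3_\kappa/\kappa)=\text{PGL}(4,\kappa)$ is generated by elements whose order is a power of $p=2$. Here $G=\db{Z}/2\db{Z}\times S_3$, and $\db{Z}/2\db{Z}$ is generated by $\iota$ (order $2$), while $S_3$ is generated by transpositions like $\tau_{01}$ (order $2$); the element $\sigma$ of order $3$ is a product of two transpositions, so it is not needed as a generator. Thus $G$ is generated by order-$2$ elements, and since these act on $|2\Theta|\simeq\db{P}^3$ through $\text{PGL}(4,\kappa)$, each such generator lifts to $\widetilde h\in\text{GL}(4,\kappa)$ with $\widetilde h^{2}=\mu I_4$. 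This is exactly what Lemma \ref{ptrick} requires, so the line $\bigtriangleup_X:=\overline{P_1P_2}$ is fixed by $G$.

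It remains to show $(\shf{M}_X(2,0))^G$ is exactly $\bigtriangleup_X$ and not something bigger, i.e.\ that the fixed locus of $G$ on $\db{P}^3$ is at most one-dimensional. The cleanest way is to observe that a nontrivial finite group acting on $\db{P}^3$ has fixed locus a disjoint union of linear subspaces (eigenspace decomposition for a generator, intersected over generators); if the fixed locus had a component of dimension $\geq 2$ it would be a plane or all of $\db{P}^3$, and one can rule this out by exhibiting a single point moved by some $g\in G$ in every $2$-plane through $\bigtriangleup_X$ — for instance using the known action on $\text{Km}_X$ and Lemma \ref{fixptKm}, which says $G$ has only three fixed points on the Kummer quartic; if a whole plane were $G$-fixed, it would meet the quartic $\text{Km}_X$ in a curve of $G$-fixed points, contradicting Lemma \ref{fixptKm}. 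Combined with $\bigtriangleup_X\subset(\shf{M}_X(2,0))^G$ this forces equality. I expect the main obstacle to be this last step: one must be careful that the $G$-action on $\db{P}^3$ is faithful (or at least has the stated fixed-point structure) and that the intersection of $\bigtriangleup_X$ with $\text{Km}_X$ is genuinely $\{P_1,P_2\}$ or otherwise compatible with Lemma \ref{fixptKm}, rather than the line lying inside the Kummer surface — this needs the explicit geometry of $|2\Theta|$ from \cite{Laszlo-Pauly}.
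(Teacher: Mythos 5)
Your route is the same as the paper's: take the three $G$-fixed points on $\text{Km}_X$ from Lemma \ref{fixptKm}, apply Lemma \ref{ptrick} using that $G=\db{Z}/2\db{Z}\times S_3$ is generated by the order-$2$ elements $\iota$ and the transpositions, and then use the fact that $\text{Km}_X$ is a quartic hypersurface to bound the dimension of the fixed locus. Your explicit check of the $2$-power-order hypothesis of Lemma \ref{ptrick} is correct and is exactly what the paper leaves implicit.

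The one genuine gap is in your last step. Ruling out a $G$-fixed component of dimension $\geq 2$ and knowing $\bigtriangleup_X\subset(\shf{M}_X(2,0))^G$ does not by itself ``force equality'': a priori the fixed locus, being only a disjoint union of linear subspaces, could consist of $\bigtriangleup_X$ together with additional isolated fixed points or even a second fixed line. The way to close this — and the way the paper's one-line proof should be read — is to use Lemma \ref{ptrick} once more: under its hypothesis the fixed locus is linearly closed (the line through \emph{any} two fixed points is pointwise fixed), hence it is a single linear subspace $L\subset\db{P}^3$. Then $\dim L\geq 1$ since $L$ contains two distinct fixed points, and $\dim L\leq 1$ since $L\cap\text{Km}_X=(\text{Km}_X)^G$ is finite whereas a fixed plane (or all of $\db{P}^3$) would meet the quartic in a positive-dimensional locus of fixed points; so $L=\bigtriangleup_X$. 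Equivalently: a fixed point off $\bigtriangleup_X$, joined by Lemma \ref{ptrick} to the points of the pointwise-fixed line $\bigtriangleup_X$, would sweep out a pointwise-fixed plane, which your own quartic argument excludes. Finally, your worry about whether $\bigtriangleup_X$ could lie inside $\text{Km}_X$ is not needed for the statement and cannot occur: $\bigtriangleup_X$ is pointwise fixed while $(\text{Km}_X)^G$ consists of only three points.
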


\begin{proof}
 As $(\shf{M}_{X}(2, 0))^G\cap \text{Km}_X$ is a set of three points and $\text{Km}_X$ is a hypersurface, by Lemma \ref{proline}, $(\shf{M}_{X}(2, 0))^G$ is a projective line.
\end{proof}

\subsection{Verschiebung} \label{frotwst}
Let $X(n)$ be the scheme deduced from $X$ by the extension of scalars $a \mapsto a^{2^n}$. Denote by $F_n$ the relative Frobenius $F_n: X(n)\ra X(n+1)$ and  by $V_n$ the Verschiebung $ \xymatrix{V_n: \shf{M}_{X(n+1)}(2, 0)  \ar@{.>}[r] & \shf{M}_{X(n)}(2, 0)}, [E]\mapsto [F_n^*E]$.

As $X(n)$ has the same properties as $X$, the results for $X$ also hold for $X(n)$. Let $\bigtriangleup_{X(n)}$ be the projective line of $\shf{M}_{X(n)}(2, 0)$ in Proposition \ref{proline}. As $F_n$ is compatible with the $G$-action, the pullback of a $G$-bundle is a $G$-bundle, hence $V_n(\bigtriangleup_{X(n+1)})\subset\bigtriangleup_{X(n)}$.   

To reduce $V_n|_{\bigtriangleup_{X(n+1)}}: \bigtriangleup_{X(n+1)} \ra \bigtriangleup_{X(n)}$ to  a linear map, we point out a base point of $V_n$ on $\bigtriangleup_{X(n+1)}$. Recall from \cite{Raynaud} that there is a theta characteristic $B_n$ of $X(n)$ defined as $0\ra \shf{O}_{X(n)}\ra F_{n-1*}\shf{O}_{X(n-1)} \ra B_n\ra 0$. Consider the rank-$2$ bundle $F_{n*}B_{n}^{-1}$ over $X(n+1)$. Clearly $\text{det}(F_{n*}B_{n}^{-1})=\shf{O}_{X(n+1)}$. Because
$ 0\ra B_n \ra F_{n}^*(F_{n*}B_{n}^{-1}) \ra B_n^{-1}\ra 0$ and $\text{deg}(B_n)=1$, thus $F_{n*}B_{n}^{-1}$ is stable and  $ F_{n}^*(F_{n*}B_{n}^{-1})$ is unstable. Moreover, $F_{n*}B_{n}^{-1}$ has a $G$-action by construction, thus $[F_{n*}B_{n}^{-1}]\in\bigtriangleup_{X(n+1)}$.  Therefore, 
\begin{lem}\label{linear}
The restriction $ \xymatrix{V_n|_{\bigtriangleup_{X(n+1)}}:\bigtriangleup_{X(n+1)} \ar@{.>}[r] & \bigtriangleup_{X(n)}}$ is a linear map.
\end{lem}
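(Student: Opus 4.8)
The plan is to use the explicit form of the Verschiebung. By \cite{Laszlo-Pauly}, under the identifications $\shf{M}_{X(n+1)}(2,0)\simeq\db{P}^3$ and $\shf{M}_{X(n)}(2,0)\simeq\db{P}^3$ the rational map $V_n$ is, in characteristic $2$, given by a system of four quadrics. Restricting a quadratic self-map of $\db{P}^3$ to a line produces a map given by binary quadratic forms; and a rational map $\db{P}^1\to\db{P}^1$ of degree $\leq2$ that has a base point must in fact have degree $\leq1$. Since $\bigtriangleup_{X(n+1)}$ was shown above to contain the base point $[F_{n*}B_n^{-1}]$ of $V_n$, this degree drop is precisely the mechanism that forces $V_n|_{\bigtriangleup_{X(n+1)}}$ to be linear, and it hinges on the characteristic, hence the degree of $V_n$, being $2$.

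Concretely, I would fix homogeneous coordinates $[s:t]$ on $\bigtriangleup_{X(n+1)}\simeq\db{P}^1$, so that the linear embedding $\bigtriangleup_{X(n+1)}\hookrightarrow\db{P}^3$ is given by linear forms $\ell_0,\dots,\ell_3$ in $s,t$, and choose coordinates $z_0,\dots,z_3$ on the target $\db{P}^3$ with $\bigtriangleup_{X(n)}=\{z_2=z_3=0\}$. Substituting the $\ell_i$ into the four quadrics defining $V_n$ yields binary quadratic forms $Q_0,Q_1,Q_2,Q_3$ in $s,t$, and the inclusion $V_n(\bigtriangleup_{X(n+1)})\subset\bigtriangleup_{X(n)}$ established above forces $Q_2\equiv Q_3\equiv0$; thus $V_n|_{\bigtriangleup_{X(n+1)}}$ is $[s:t]\mapsto[Q_0(s,t):Q_1(s,t)]$. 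Here $Q_0,Q_1$ are not both zero, because $V_n$ is defined at $[\shf{O}_{X(n+1)}^{\oplus2}]\in\bigtriangleup_{X(n+1)}$ (its Frobenius pullback $\shf{O}_{X(n)}^{\oplus2}$ is semistable), so $\bigtriangleup_{X(n+1)}$ is not contained in the indeterminacy locus of $V_n$. On the other hand $[F_{n*}B_n^{-1}]$ lies in that indeterminacy locus, since $F_n^*(F_{n*}B_n^{-1})$ is unstable; writing it as $[\ell_\bullet(s_0,t_0)]$ we get $Q_0(s_0,t_0)=Q_1(s_0,t_0)=0$, so the linear form $t_0s-s_0t$ divides both $Q_0$ and $Q_1$. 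Cancelling it presents $V_n|_{\bigtriangleup_{X(n+1)}}$ as $[s:t]\mapsto[L_0(s,t):L_1(s,t)]$ with $\deg L_i\leq1$, i.e. as a linear map.

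I expect the main obstacle to be the first step: securing the quadratic presentation of $V_n$ in coordinates compatible with both copies of $\db{P}^3$ and with the lines $\bigtriangleup_{X(n+1)}$ and $\bigtriangleup_{X(n)}$, for which I would rely on the description of the Frobenius action in \cite{Laszlo-Pauly}. Once that is available, the rest is elementary: the distinguished points $[\shf{O}_{X(n+1)}^{\oplus2}]$ and $[F_{n*}B_n^{-1}]$ show that the base locus of $V_n$ meets $\bigtriangleup_{X(n+1)}$ in a nonempty but proper closed subscheme, and removing a common linear factor from two binary quadratic forms is immediate, so this is the shortest part of the argument.
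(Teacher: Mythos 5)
Your proposal is correct and follows essentially the same route as the paper: restrict the quadratic presentation of $V_n$ from \cite{Laszlo-Pauly} to the line, use the base point $[F_{n*}B_n^{-1}]$ (Frobenius-destabilized) to extract a common linear factor from the two binary quadrics, and cancel it. The extra details you supply — checking $V_n(\bigtriangleup_{X(n+1)})\subset\bigtriangleup_{X(n)}$ kills two of the four quadrics and that $[\shf{O}_{X(n+1)}^{\oplus 2}]$ guarantees the restriction is not identically indeterminate — are correct refinements of the same argument, which the paper leaves implicit.
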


\begin{proof}
By \cite[Proposition 6.1]{Laszlo-Pauly}, $V_n$ is defined by qradratic polynomials.  Thus $V_n|_{\bigtriangleup_{X(n+1)}}$ is given by two quadratic polynomials $\{ h_1, h_2\}$ in two variables. As $V_n|_{\bigtriangleup_{X(n+1)}}$ has a base point $[F_{n*}B_{n}^{-1}]$, $h_1$ and $h_2$ have a common linear factor, thus $V_n|_{\bigtriangleup_{X(n+1)}}$ is reduced to a linear map.
\end{proof}

\begin{proof}[Proof of Theorem \ref{fixline}]  Assume that $\#\kappa=2^d$. Note that $X(d)=X$. As the rational map $V$ is the composition $V_0\circ V_1\circ\cdots\circ V_{d-1}$ and $ \xymatrix{V_n|_{\bigtriangleup_{X(n+1)}}:\bigtriangleup_{X(n+1)} \ar@{.>}[r] & \bigtriangleup_{X(n)}}$ is linear for $0\leq n\leq d-1$ by Lemma \ref{linear}, thus $V|_{\bigtriangleup_{X}}$ is linear. Moreover, Lemma \ref{fixfro} holds for every $X(n)$, i.e. there are  semistable bundles $E_{1, X(n)}, E_{2, X(n)}$  such that  $V_n([E_{j, X(n+1)}])=[E_{j, X(n)}]$ for $j=1,2$.
 Thus $V|_{\bigtriangleup_{X}}$ has three distinct fixed points, i.e. $[\shf{O}_{X}^{\oplus 2}]$, $[E_{1, X}]$ and $[E_{2, X}]$. In conclusion, $V|_{\bigtriangleup_{X}}$ is the identity map.
\end{proof}

\begin{rem}
Actually it can be shown that there exists a vector bundle $\shf{E}$ over $X\times_\kappa \Lambda $ with $\Lambda=\Spec\kappa[\lambda]$ such that (1) the modular morphism $i:\Lambda \ra \bigtriangleup_{X}$ is an open immersion with the only missing point to be $[\shf{O}_X^{\oplus 2}]$; (2) there exists a morphism $(F_X^d\times\text{id}_\Lambda)^*\shf{E}\ra \shf{E}$ which is isomorphic if replacing $\Lambda$ by an open subset; (3) all bundles $\shf{E}_\lambda$ are subbundles of $\omega_{X/\kappa}^2\oplus\omega_{X/\kappa}^2$.
\end{rem}

\begin{rem}
By \cite{Ancochea}, a curve in Theorem \ref{fixline} is defined by the equation
\begin{align}\label{cvequ}
 \quad y^2+(x^2+x)y+(t^2+t)(x^5+x)+t^2x^3=0, \quad t\neq 0, 1.
\end{align}
 Note that the automorphism group $\text{Aut}(C_0/\db{F}_2)$ of  the curve $C_0/\db{F}_2$ in \cite{Laszlo}  is not $ \db{Z}/2\db{Z}\times S_3$. We need to  consider $C_0\otimes_{\db{F}_2}\db{F}_{2^2}$ so that $\text{Aut}(C_0\otimes_{\db{F}_2}\db{F}_{2^2}/\db{F}_{2^2})=\db{Z}/2\db{Z}\times S_3$. That is the case when $t\in\db{F}_4\backslash\db{F}_2$ in Equation (\ref{cvequ}).
\end{rem}

\section{Discussion on higher characteristics}\label{highchar}
In this section, we  discuss some potential generalization of  Theorem \ref{fixline} to  higher characteristic.  

Notations: Let $Y$ be a projective smooth genus-$2$ curve  over a finite field $\kappa$ of characteristic $p>0$.   It is known that $\shf{M}_{Y}(2,0)$  is isomorphic to the linear system $|2\Theta|\simeq \db{P}^3_\kappa$ over $\text{Pic}^1(\overline{Y})$. Let $\xymatrix{V: \shf{M}_{Y}(2, 0)  \ar@{.>}[r] & \shf{M}_Y(2, 0)}$ be the rational map induced by  the geometric Frobenius map of $Y$ over $\kappa$. Let $Y(n)$, $F_{n}: Y(n)\ra Y(n+1)$ and $\xymatrix{V_n: \shf{M}_{Y(n+1)}(2, 0)  \ar@{.>}[r] & \shf{M}_{Y(n)}(2, 0)}$ be the same as in Subsection \ref{frotwst}. By \cite[Proposition A.2]{Laszlo-Pauly1}, $V_n$'s are given by polynomials of degree $p$. Assume that $\#\kappa=p^d$, then $Y(d)=Y$.

Recall the proof of Theorem \ref{fixline}, a large part can be applied to an arbitrary characteristic. In particular, the following two facts are true.

\begin{lem}\label{basept}
If  $V^m|_Z=\text{id}_Z$ for a reduced subscheme $Z\subset \shf{M}_{Y}(2, 0)$ of positive dimension and for some $m>0$, then the  closure $\overline{Z}$ of $Z$ contains a base point of $V_{d-1}$.
\end{lem}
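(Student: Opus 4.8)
The plan is to reduce $V^m|_Z = \mathrm{id}_Z$ to a statement about the last relative Frobenius $V_{d-1}$ and then exploit the degree of the map together with a dimension count. First I would unwind the definition of $V$ as the composition $V_0 \circ V_1 \circ \cdots \circ V_{d-1}$, and more generally $V^m$ as a composition of $md$ maps of the form $V_n$ (cycling through $n = d-1, d-2, \ldots, 0$ repeatedly). By \cite[Proposition A.2]{Laszlo-Pauly1} each $V_n$ is given by homogeneous polynomials of degree $p$ on $\db{P}^3$, so $V^m$ is given by polynomials of degree $p^{md}$. The key point is that $V^m|_Z = \mathrm{id}_Z$ forces $Z$ to avoid every indeterminacy locus appearing along the composition: if $\overline{Z}$ met the base locus of some $V_n$ appearing in the chain, then $V^m$ would fail to be defined on a point of $\overline{Z}$, yet the identity map extends to all of $\overline{Z}$.

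The heart of the argument is to rule out the possibility that $\overline{Z}$ misses the base locus of $V_{d-1}$ specifically. Suppose for contradiction that $\overline{Z} \cap \mathrm{Bs}(V_{d-1}) = \emptyset$. Then $V_{d-1}$ restricts to a morphism on $\overline{Z}$, and since $V_{d-1}$ is defined by degree-$p$ polynomials, this restriction $V_{d-1}|_{\overline{Z}} \colon \overline{Z} \to \shf{M}_{Y(d-1)}(2,0)$ is finite of degree $p^{\dim Z}$ onto its image (or at least: its fibers are finite and it multiplies degrees by a power of $p > 1$ on any positive-dimensional $\overline{Z}$, since $V_{d-1}$ is not a linear embedding). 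On the other hand, running through the rest of the chain $V_{md-2}, \ldots, V_0$, each subsequent $V_n|_{(\cdot)}$ is again given by polynomials of degree $p$, so each can only keep the degree the same or increase it. Composing everything, the degree of $V^m|_{\overline{Z}}$ onto its image would be divisible by $p^{\dim Z} \geq p > 1$. But $V^m|_Z = \mathrm{id}_Z$, so $V^m|_{\overline{Z}}$ is the identity, which has degree $1$ — a contradiction.

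I would organize the write-up as follows. Step 1: express $V^m$ as the stated composition of $V_n$'s, record that each $V_n$ is given by degree-$p$ forms on $\db{P}^3$. Step 2: observe that $V^m|_Z = \mathrm{id}_Z$ extends to $V^m|_{\overline{Z}} = \mathrm{id}_{\overline{Z}}$ by continuity/density, so in particular $V^m$ is defined everywhere on $\overline{Z}$, hence $\overline{Z}$ is disjoint from the indeterminacy locus of every factor in the composition, and the composition literally computes the identity on $\overline{Z}$. Step 3: assume $\overline{Z} \cap \mathrm{Bs}(V_{d-1}) = \emptyset$ and derive the degree contradiction as above, being slightly careful to phrase the degree bookkeeping in terms of pulling back a hyperplane class $H$: we have $(V_n|_{\overline{Z}})^* H = pH|_{\overline{Z}}$-type relations when $V_n$ is a morphism near $\overline{Z}$, and the composition gives $(V^m|_{\overline{Z}})^* H \equiv p^{md} H$ in $\mathrm{Pic}(\overline{Z}) \otimes \db{Q}$ up to sign, whereas $(\mathrm{id}_{\overline{Z}})^* H = H$, forcing $p^{md} = 1$, absurd.

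The main obstacle I anticipate is making the degree/intersection-theoretic bookkeeping rigorous when the intermediate maps $V_n|_{\overline{Z}}$ may themselves have base points on $\overline{Z}$ even though $V_{d-1}|_{\overline{Z}}$ does not — one must be careful that the \emph{first} factor $V_{d-1}$ is the only one we get to assume is a morphism near $\overline{Z}$, while the later factors are only rational. The clean fix is to note that $V^m|_{\overline{Z}}$ being a (defined) morphism equal to the identity means $\overline{Z}$ avoids \emph{all} the relevant indeterminacy loci, so in fact every factor restricts to a morphism on the appropriate image, and the degrees multiply honestly; then the contradiction $1 = p^{md}$ is immediate. Alternatively, and perhaps more simply, one observes directly that if $\overline{Z}$ avoids $\mathrm{Bs}(V_{d-1})$ then $V_{d-1}|_{\overline{Z}}$ is a morphism that is not injective on tangent spaces (its differential vanishes identically, being a $p$-power map in the Frobenius direction) — but $\mathrm{id}_{\overline{Z}}$ has injective differential, contradicting that $V^m|_{\overline{Z}}$ factors through $V_{d-1}|_{\overline{Z}}$. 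I would present the degree version as the primary argument since it only uses the stated fact that $V_n$ has degree $p$, and mention the differential version as a remark.
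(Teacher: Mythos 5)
Your argument has two fatal problems, and they pull in opposite directions. First, your ``key point'' --- that $V^m|_Z=\mathrm{id}_Z$ forces $\overline{Z}$ to avoid every indeterminacy locus in the chain --- is false, and it is contradicted by the central example of this paper: $\bigtriangleup_X$ is fixed pointwise by $V$ (Theorem \ref{fixline}) and yet contains the base point $[F_{n*}B_n^{-1}]$ of $V_n$; indeed that base point is exactly what makes $V_n|_{\bigtriangleup}$ linear in Lemma \ref{linear}. The equality $V^m|_Z=\mathrm{id}_Z$ is an equality of rational maps, i.e.\ it holds on the dense open subset of $\overline{Z}$ where the composition is defined; the fact that the identity extends to all of $\overline{Z}$ as a morphism says nothing about the given composition being defined at every point of $\overline{Z}$. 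If your key point were true, it would combine with your own ``heart of the argument'' to show that no positive-dimensional pointwise-fixed $Z$ exists at all, which would contradict Theorems \ref{poslocus} and \ref{fixline}. Second, the degree bookkeeping is backwards. Later factors given by degree-$p$ forms do \emph{not} ``keep the degree the same or increase it'': when an intermediate image meets the base locus of the next factor, the common zero divisor is removed and the degree drops --- this is precisely the mechanism that permits $V^m$ to be the identity on a positive-dimensional locus. Moreover, your divisibility claim for the first factor is unjustified: if $\overline{Z}\cap\mathrm{Bs}(V_{d-1})=\emptyset$, the projection formula gives $\deg(\text{map})\cdot\deg(\text{image})=p^{\dim Z}\deg\overline{Z}$, and nothing forces $p$ to divide the mapping degree; in your situation $V_{d-1}|_{\overline{Z}}$ has a rational left inverse (the rest of the chain), so it is generically injective and the factor $p^{\dim Z}$ sits entirely in the degree of the image. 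The ``differential version'' fails for the same reason: $V_{d-1}$ is pullback along the relative Frobenius of the curve, not the Frobenius of the moduli space, and its differential is not identically zero --- if it were, Theorem \ref{fixline} would be impossible.

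What a correct degree argument actually yields is weaker than you claim and weaker than the statement: take a projective curve $C\subset\overline{Z}$, normalize it, and follow the chain; writing $\deg f_{i+1}^*\shf{O}(1)=p\,\deg f_i^*\shf{O}(1)-\beta_i$ with $\beta_i\geq 0$ the degree of the base divisor at the $i$-th step, the fact that the total composition is the identity on $C$ gives $\sum_i p^{\,md-1-i}\beta_i=(p^{md}-1)\deg C>0$, hence \emph{some} $\beta_i>0$, i.e.\ some intermediate image of $C$ meets the base locus of the corresponding factor $V_{n(i)}$. Pinning the base point down on $\overline{Z}$ itself, and for the first factor $V_{d-1}$ specifically --- which is what the lemma asserts --- requires an additional argument (the intermediate images $V^j(Z)$ need not lie in $\overline{Z}$ when $j\not\equiv 0 \bmod m$), and your proposal does not supply it. Note also that the paper states this lemma without proof, so there is no written argument to compare against; but the intended mechanism is clearly the base-divisor/degree computation underlying Lemma \ref{linear}, run in the opposite direction, and your write-up as it stands would need to be rebuilt around that computation rather than around the two claims criticized above.
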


\begin{lem}\label{charp}
Let  $H\subset\text{Aut}(Y/\kappa)$ be a subgroup generated by elements with order of the form $p^n$. Let $\xymatrix{V: \shf{M}_Y(2, 0)  \ar@{.>}[r] & \shf{M}_Y(2, 0)}$ be the rational map given in the notations.  Assume that the set $(\text{Km}_Y)^H$ of fixed points of  $H$  on the Kummer surface $\text{Km}_Y$  is finite. Given a semistable bundle $[E]\in\shf{M}_{Y}(2,0)^H$ satisfying that $F_Y^*E$ is semistable and $[F_Y^*E]\neq [\shf{O}_{Y}^{\oplus 2}]$, where $F_Y$ is  the absolute Frobenius map of $Y$.  Then the fixed point locus of the $H$-action on $\shf{M}_{Y}(2, 0)$ is a projective line, denoted by $\bigtriangleup_Y$. And  the restriction of $V$ to $\bigtriangleup_Y$ is a rational map $V|_{\bigtriangleup_Y}: \db{P}^1_\kappa \ra \db{P}^1_\kappa$.   
\end{lem}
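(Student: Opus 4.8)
The plan is to transcribe the proof of Proposition~\ref{proline} together with the first half of the argument in Subsection~\ref{frotwst}, feeding in the hypotheses on $H$, on $(\text{Km}_Y)^H$, and on $[E]$ in place of the special features of the genus-$2$, characteristic-$2$ situation.

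First I would record that the $H$-action on $\shf{M}_Y(2,0)\cong|2\Theta|\cong\db{P}^3_\kappa$ is by projective-linear transformations: every $g\in\text{Aut}(Y/\kappa)$ preserves the Abel--Jacobi image $\Theta\subset\text{Pic}^1(\overline Y)$, hence acts $\kappa$-linearly on $H^0(\text{Pic}^1(\overline Y),\shf{O}(2\Theta))$, hence linearly on $|2\Theta|$, compatibly with the identification $\shf{M}_Y(2,0)\cong|2\Theta|$. Since $H$ is generated by elements whose order is a power of $p$, Lemma~\ref{ptrick} applies and shows that the (reduced) fixed locus $\shf{M}_Y(2,0)^H$ is closed under forming the line through any two of its points; a subset of $\db{P}^3$ with that property is a projective-linear subspace (its cone in $k^{4}$ is closed under addition and scaling), so $\shf{M}_Y(2,0)^H$ is a linear subspace $L\subseteq\db{P}^3_\kappa$.

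Next I would pin down $\dim L$. For the lower bound, $[\shf{O}_Y^{\oplus2}]\in L$ (since $g^*\shf{O}_Y^{\oplus2}\cong\shf{O}_Y^{\oplus2}$ for all $g\in H$) and $[E]\in L$ by hypothesis, and these two points are distinct: if $[E]=[\shf{O}_Y^{\oplus2}]$ then $[F_Y^*E]=[F_Y^*\shf{O}_Y^{\oplus2}]=[\shf{O}_Y^{\oplus2}]$, contradicting the assumption $[F_Y^*E]\neq[\shf{O}_Y^{\oplus2}]$; hence $\dim L\ge1$. For the upper bound, $\text{Km}_Y\subset\shf{M}_Y(2,0)$ is a quartic hypersurface preserved by $H$ (automorphisms preserve the strictly semistable locus), so $(\text{Km}_Y)^H=L\cap\text{Km}_Y$. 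If $\dim L\ge2$ then $L$ is a plane or all of $\db{P}^3_\kappa$, and so meets the hypersurface $\text{Km}_Y$ in a subscheme of dimension $\ge1$, forcing $(\text{Km}_Y)^H$ to be infinite and contradicting the hypothesis. Thus $\dim L=1$, and I set $\bigtriangleup_Y:=L$, a projective line over $\kappa$. For the restriction of $V$, note that the geometric Frobenius $F_Y^{d}\colon Y\to Y$ commutes with every automorphism of $Y$, so on its domain of definition $V$ carries $H$-fixed points to $H$-fixed points, i.e. $V(\bigtriangleup_Y)\subseteq\bigtriangleup_Y$; moreover $V$ is defined at $[\shf{O}_Y^{\oplus2}]\in\bigtriangleup_Y$ because $(F_Y^{d})^*\shf{O}_Y^{\oplus2}\cong\shf{O}_Y^{\oplus2}$ is semistable, so $\bigtriangleup_Y$ is not contained in the indeterminacy locus of $V$, and $V|_{\bigtriangleup_Y}:\db{P}^1_\kappa\ra\db{P}^1_\kappa$ is a genuine rational map.

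This lemma is precisely the portion of the characteristic-$2$ argument that carries over unchanged, so I do not expect a genuinely hard step; the points needing care are the structural inputs above — checking that the $H$-action on $\db{P}^3_\kappa$ is linear (so that Lemma~\ref{ptrick} truly applies) and that the fixed locus is a \emph{single} linear subspace rather than a union of them — together with the verification that $\bigtriangleup_Y$ escapes the indeterminacy locus of $V$. Once these are in place, forcing $\dim L=1$ by intersecting a hypothetical $2$-plane with the Kummer quartic is immediate; the genuinely new difficulties, namely showing that $V|_{\bigtriangleup_Y}$ is linear or the identity, are exactly what breaks down for $p>2$ and lie beyond this lemma.
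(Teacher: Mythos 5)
Your proposal is correct and follows exactly the route the paper intends: the paper gives no separate proof of this lemma, asserting only that the arguments for Theorem \ref{fixline} carry over, and your write-up is precisely that transcription (Lemma \ref{ptrick} plus linearity of the $H$-action to get a linear fixed locus, the two distinct fixed points $[\shf{O}_Y^{\oplus 2}]$ and $[E]$ for the lower bound, finiteness of $(\text{Km}_Y)^H$ against the Kummer hypersurface for the upper bound, and compatibility of Frobenius pullback with the $H$-action plus a point of definition on $\bigtriangleup_Y$ for the restriction of $V$). The only cosmetic imprecision is the line ``$[E]=[\shf{O}_Y^{\oplus2}]\Rightarrow F_Y^*E\cong F_Y^*\shf{O}_Y^{\oplus2}$'', which conflates S-equivalence with isomorphism; the conclusion still holds since the Jordan--H\"older graded of $F_Y^*E$ would be trivial, so $[F_Y^*E]=[\shf{O}_Y^{\oplus2}]$.
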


The special property of characteristic $2$ that is used in proving Theorem \ref{fixline} is that the existence of a single base point on $\bigtriangleup_{Y(n+1)}$ is sufficient to lower the degree of the  polynomials that define $(V_n)|_{\bigtriangleup_{Y(n+1)}}$ from $2$ to $1$. Similar cases may happen in other small characteristics. However, in large characteristic, as it is proved in  \cite[Proposition 3.1]{Lange-Pauly} that every $V_n$ has exactly $16$ base points for characteristic $p>2$, then the intersection number of $\bigtriangleup_{Y(n)}$ with  the scheme-theoretic base locus $ \shf{B}_{n}$ of $V_{n-1}$ is required to check if $(V_{n-1})|_{\bigtriangleup_{Y(n)}}$ can be reduced to a linear map. To calculate $\bigtriangleup_{Y(n)}\cap \shf{B}_{n}$,   more  about $V_n$ should be discovered. 

If suitable conditions could be found to ensure that every $(V_n)|_{\bigtriangleup_{Y(n+1)}}$ is linear, then the  map $V|_{\bigtriangleup_Y} $ in Lemma \ref{charp} is linear and non-constant; moreover, as $V|_{\bigtriangleup_Y} $ is defined over a finite field, there exists some $N$ such that $(V|_{\bigtriangleup_Y})^N=\text{id}$. Therefore, by Theorem \ref{poslocus}, we would obtain representations of $\pi_1(Y\otimes_\kappa\widetilde{\kappa})$ with an infinite geometric monodromy.



\end{document}